\definecolor{red}{rgb}{1.00,0.00,0.00}
{\numberwithin{equation}{section}
\setlength{\parindent}{1em}

\newtheorem{theorem}{Theorem}[section]
\newtheorem{lemma}{Lemma}[section]

\newtheorem{example}{Example}[section]

\SetSymbolFont{stmry}{bold}{U}{stmry}{m}{n}

\newcommand{\normmm}[1]{{\left\vert\kern-0.25ex\left\vert
\kern-0.25ex\left\vert #1
    \right\vert\kern-0.25ex\right\vert\kern-0.25ex\right\vert}}
\geometry{left=3cm,right=3cm,top=4cm,bottom=2.5cm}

\begin{document}
\title{An analysis of the NLMC upscaling method for high contrast problems}
\author{ Lina Zhao\footnotemark[1]\qquad
\;Eric T. Chung\footnotemark[2]}
\renewcommand{\thefootnote}{\fnsymbol{footnote}}
\footnotetext[1]{Department of Mathematics,The Chinese University of Hong Kong, Hong Kong Special Administrative Region. ({lzhao@math.cuhk.edu.hk})}
\footnotetext[2]{Department of Mathematics,The Chinese University of Hong Kong, Hong Kong Special Administrative Region. ({tschung@math.cuhk.edu.hk})}

%
\maketitle

\textbf{Abstract:}
In this paper we propose simple multiscale basis functions with constraint energy minimization to solve elliptic problems with high contrast medium. Our methodology is based on the recently developed non-local
multicontinuum method (NLMC). The main ingredient of the method is the construction of suitable local basis functions with the capability of capturing multiscale features and non-local effects. In our method, each coarse block is decomposed into various regions according to the contrast ratio, and we require that the contrast ratio should be relatively small within each region. The basis functions are constructed by solving a local problem defined on the oversampling domains and they have mean value one on the chosen region and zero mean otherwise. Numerical analysis shows that the resulting basis functions can be localizable and have a decay property. The convergence of the multiscale solution is also proved. Finally, some numerical experiments are carried out to illustrate the performances of the proposed method. They show that the proposed method can solve problem with high contrast medium efficiently. In particular, if the oversampling size is large enough, then we can achieve the desired error.

\textbf{Keywords:} Contraint energy minimization, Upscaling, Non-local multicontinuum method, High contrast

\pagestyle{myheadings} \thispagestyle{plain} \markboth{Zhao and Chung}
    {Constraint energy minimization for high contrast problem}

\section{Introduction}

In this paper we consider
\begin{equation}
\begin{split}
-\nabla \cdot(\kappa \nabla u)&=f \quad\mbox{in}\; \Omega,\\
u&=0 \quad\mbox{on}\; \partial \Omega,
\end{split}
\label{eq:model}
\end{equation}
where $\Omega\subset \mathbb{R}^2$ is the computational domain and $\kappa$ is a high contrast with $0< \kappa_{min}\leq \kappa\leq \kappa_{max}$
and is a multiscale field. The proposed method can be extended to 3D easily.

If the coefficient $\kappa$ is rough, then the solution $u$ to \eqref{eq:model} will also be rough; to be specific, $u$ will not in general be in $H^2(\Omega)$ and may not be in $H^{1+\epsilon}(\Omega)$ for any $\epsilon>0$. For this kind of low regularity, standard analysis usually fails. Moreover, the classical polynomial based finite element methods could perform arbitrary badly for such problems, see, e.g., \cite{Babuska00}. To resolve this issue, various numerical methods have been proposed and analyzed, and among all the methods we mention in particular the special finite element methods \cite{Babuska83,Babuska94}, the upscaled models \cite{Durlofsky91,WuEffndievHou02} and the multiscale methods \cite{Hughes95,HouWu97,Hughes98,HouWuCai99,Arbogast07,ChungEfendievLi14,ChungEfendievleungflow,CEH16,Owhadi17,OwhadiZhang07,Gallistl16,Gallistl17}.

The concept of non-local upscaling has been successfully applied to problems in porous media, see, e.g., \cite{EfendievDurlofsky00,Cushman02,Durlofsky07}. Motivated by the work given in \cite{Delgoshaie15}, the nonlocal multicontinua (NLMC) upscaling technique was initially introduced for flows in heterogeneous fractured media in \cite{ChungEfendiev18}, and have been successfully applied to different problems under application \cite{VasilyevaChung19,VasilyevaChung192,VasilyevaChungmixed18,VasilyevaChungnlmc19}. The main idea of NLMC upscaling technique is to construct the multiscale basis functions over the oversampling domain via an energy minimization principle. Note that the constraint should be chosen properly in order to make the localization possible. One distinctive feature of the method is that it allows a systematic upscaling for processes in the fractured porous media, and provides an effective coarse scale model whose degrees of freedom have physical meaning.

Inspired by the work given in \cite{ChungEfendiev18,VasilyevaChung19,VasilyevaChung192}, the goal of this paper is to extend the idea of nonlocal multicontinua to problem \eqref{eq:model}. For our approach, we start with decomposing the coarse block into different regions and the criterion used for the decomposition is to have relatively small contrast ratio within each region. Then, we define the constraint energy minimzation problem in the oversampling domain, where the restriction for the basis functions is defined such that they have mean value one in the chosen region and zero mean otherwise, in addition the basis functions vanish on the boundary of the oversampling domain. We remark that the vanishing property is important for the localization of the multiscale basis functions and the localization idea has also been exploited in \cite{Malqvbist14} to solve problems with heterogeneous and highly varying coefficients. Next, we can solve the local minimization problem by using the equivalent saddle point formulation to achieve the multiscale basis functions. The resulting multiscale basis functions have decay property, in addition, it can capture the fine-grid information well provided proper number of overampling layers are chosen. With the multiscale basis functions, we can solve the upscaled equation to obtain the upscaled coarse grid solution. It is worth mentioning that in our method the number of basis function is relatively small and it is equal to the number of scales over the domain. We also analyze the convergence of the proposed method. For this, we first compare the difference between the multiscale basis functions and the global basis functions, combining this with the convergence of the global solution, then we can prove the convergence of the multiscale solution in $L^2$ norm and weighted energy norm. The analysis indicates that the convergence rate only depends on the local contrast ratio, namely, the contrast ratio within each region. With proper number of oversampling layers, the first order convergence measured in energy norm can be obtained. Some numerical experiments are also carried out. The numerical experiments show that with the fixed coarse mesh size, the oversampling layers should be selected properly to achieve the desired error, in addition, for a fixed oversampling size, the performance of the scheme will deteriorate as the medium contrast increases.

The rest of the paper is organized as follows. In the next section, we present the construction of the proposed method for \eqref{eq:model}. The convergence analysis for the multiscale solution is proposed in Section~\ref{sec:error}. Then, some numerical experiments are investigated in Section~\ref{sec:numerical} to confirm the theoretical results. Finally, the conclusions are given in Section~\ref{sec:conclusion}.

\section{Preliminaries}

\subsection{Description of NLMC method}

The solution of \eqref{eq:model} satisfies
\begin{align}
a(u,v)=(f,v)\quad \forall v\in H^1_0(\Omega),\label{eq:weak}
\end{align}
where $a(u,v)=\int_\Omega \kappa \nabla u\cdot \nabla v\;dx$.

Next, the notations of the fine grids and coarse grids are introduced. Let $\mathcal{T}_H$ be a coarse-grid of the domain $\Omega$ and $\mathcal{T}_h$ be a conforming fine triangulation of $\Omega$. We assume that $\mathcal{T}_h$ is a refinement of $\mathcal{T}_H$, where $h$ and $H$ represent the fine and coarse mesh sizes, respectively. Let $K_i\in \mathcal{T}_H$ be the $i$-th coarse block and let $K_{i,m}$ be the corresponding oversampled region obtained by enlarging the coarse block $K_i$ by $m$ coarse grid layers (See Figure~\ref{grid} for an illustration). We let $N$ be the number of elements in $\mathcal{T}_H$. Furthermore, each coarse block $K_{i}, i=1\cdots,N$ is decomposed into different regions $K_i^{j}, j=1,\cdots,l_i$ and $l_i$ is the number of regions within coarse block $K_i$. In addition, we require that within each region $K_i^j$, $\kappa$ should satisfy $\{\kappa_0\leq\kappa\leq \kappa_1\}$ and the contrast ratio $C_{ratio}^{i,j}=\frac{\kappa_1}{\kappa_0}$ should be relatively small. In addition, we define $C_{ratio}=\max_{i,j}C_{ratio}^{i,j}$ for any $i=1,\cdots,N, j=1,\cdots,l_i$. We remark that each region $K^j_i$ is a continuum. 
\begin{figure}[H]
\centering
\includegraphics[width=6cm]{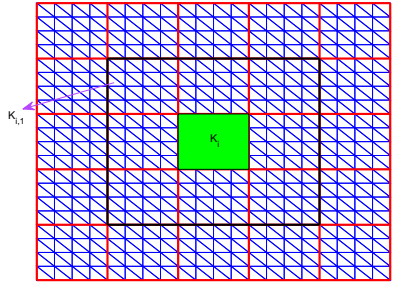}
\caption{Schematic of the coarse grid $K_i$, the oversampling region $K_{i,1}$ and the fine grids.}
\label{grid}
\end{figure}

Consider an oversampling region $K_{i,m}$ of the coarse block $K_i$, then the multiscale basis function $\psi_{i,ms}^{(j)}\in H^1_0(K_{i,m})$ is constructed by minimizing $a(\psi_{i,ms}^{(j)},\psi_{i,ms}^{(j)})$ subject to the following conditions
\begin{align*}
\frac{1}{|K_l^{n}|}\int_{K_l^{n}}\psi_i^{(j)}=\delta_{li}\delta_{nj}\quad \forall K_l^n\subset K_{i,m},
\end{align*}
where $\delta_{li},\delta_{nj}$ is the Dirac delta function and $|K_l^{n}|$ denotes the area of $K_l^n$. We can see that $\psi_i^{(j)}$ has mean value $1$ on the $j$-th region within the coarse block $K_i$ and $0$ mean in other regions inside the oversampling domain.

We remark that the above minimization problem is implicit, to solve it explicitly, we can write down the following equivalent variational formulation over each $K_{i,m}$:
\begin{align}
a(\psi_i^j,v)+\sum_{K_l^n\subset K_{i,m}}\lambda_l^n\int_{K_l^n}v\;dx&=0\quad\forall v\in H^1_0(K_{i,m}),\label{multiscale1}\\
\int_{K_l^n}\psi_i^j\;dx&=\int_{K_l^n}\delta_{li}\delta_{nj}\;dx\quad \forall K_l^n\subset K_{i,m},\label{multiscale2}
\end{align}
where $\lambda_l^n\in Q_h(K_{i,m})$ and $Q_h$ is a piecewise constant function
with respect to each region $K_i^j, i=1,\cdots, N, j=1,\cdots,l_i$ of
$\Omega$, and $Q_h(K_{i,m})$ denotes $Q_h$ restricted to $K_{i,m}$.
An illustration of the multiscale basis functions can be found in Figure~\ref{multiscale-basis}.


Then we obtain our multiscale space
\begin{align*}
V_{ms}=\mbox{span}\{\psi_{i,ms}^{(j)}\}.
\end{align*}
The resulting coarse grid equation can be written as
\begin{align*}
a(\bar{u},v)=(f,v)\quad \forall v\in V_{ms}.
\end{align*}

The construction of the local multiscale basis function is motivated by the global basis construction as defined below, and in the subsequent analysis we will exploit the global basis functions to show the convergence analysis. The global basis function $\psi_i^{(j)}$ is defined by
\begin{align}
\psi_i^{(j)}=\arg\min \{a(q_i^{(j)},q_i^{(j)})| q_i^{(j)}\in H^1_0(\Omega), \quad \frac{1}{|K_l^n|}\int_{K_l^n}q_i^{(j)}\;dx=\delta_{li}\delta_{nj},\;\forall K_l^n\subset \Omega \}.\label{global-multiscale}
\end{align}
Out multiscale finite element space $V_{glo}$ is defined by
\begin{align*}
V_{glo}=\mbox{span}\{\psi_i^{(j)}|1\leq i\leq N,\; 1\leq j\leq l_i\}.
\end{align*}

For later analysis, we define $\pi_{ij}(v)$ to be the projection which is defined for each region $K_i^j$ as
\begin{align*}
\pi_{ij}(v) =\frac{1}{|K_i^j|}\int_{K_i^j} v\;dx\quad \forall v\in L^2(\Omega)
\end{align*}
and
\begin{align*}
\pi(v)= \sum_{i=1}^N\sum_{j=1}^{l_i}\pi_{ij}(v).
\end{align*}
In addition, we define $\tilde{V}$ as the null space of the projection $\pi$, namely, $\tilde{V}=\{v\in H^1_0(\Omega)|\pi(v)=0\}$. Then for any $\psi_i^{(j)}\in V_{glo}$, we have
\begin{align*}
a(\psi_i^{(j)},v)=0\quad \forall v\in \tilde{V}.
\end{align*}
We remark that $\tilde{V}=V_{glo}^\perp$ and interested readers can refer to \cite{ChungEfendievleung18} for the explanations.

The approximate solution $u_{glo}\in V_{glo}$ obtained in the global multiscale space $V_{glo}$ is defined by
\begin{align}
a(u_{glo},v)=(f,v)\quad \forall v\in V_{glo}\label{eq:global}.
\end{align}

For later analysis, we define $\|v\|_{a}^2=\int_\Omega \kappa |\nabla u|^2\;dx$. In addition, for a given subdomain $\Omega_i\subset \Omega$, we define the local $a$-norm by $\|v\|_{a(\Omega_i)}^2=\int_{\Omega_i}\kappa |\nabla v|^2\;dx$.

\begin{figure}
\centering
\scalebox{0.3}{
\includegraphics[width=20cm]{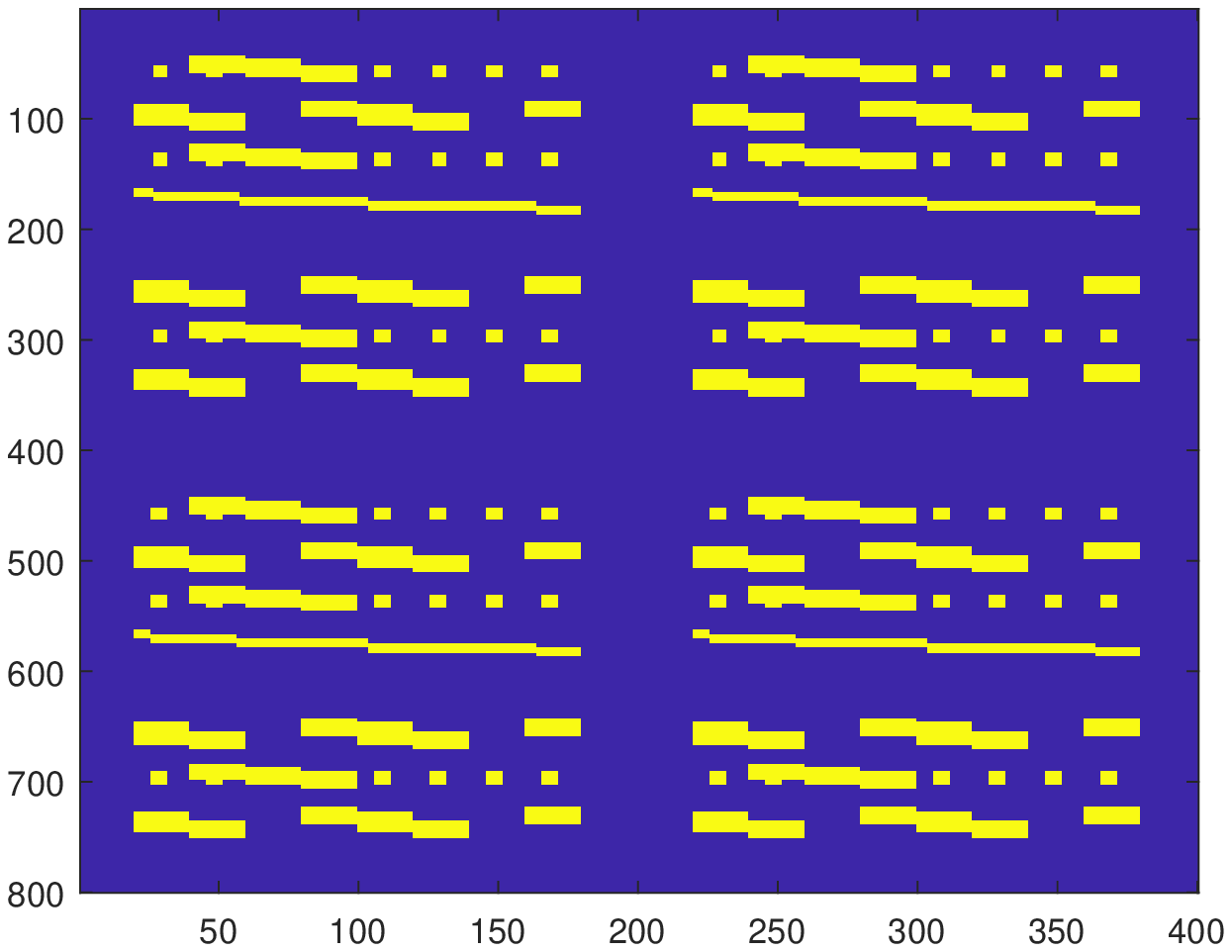}
}
\scalebox{0.3}{
\includegraphics[width=20cm]{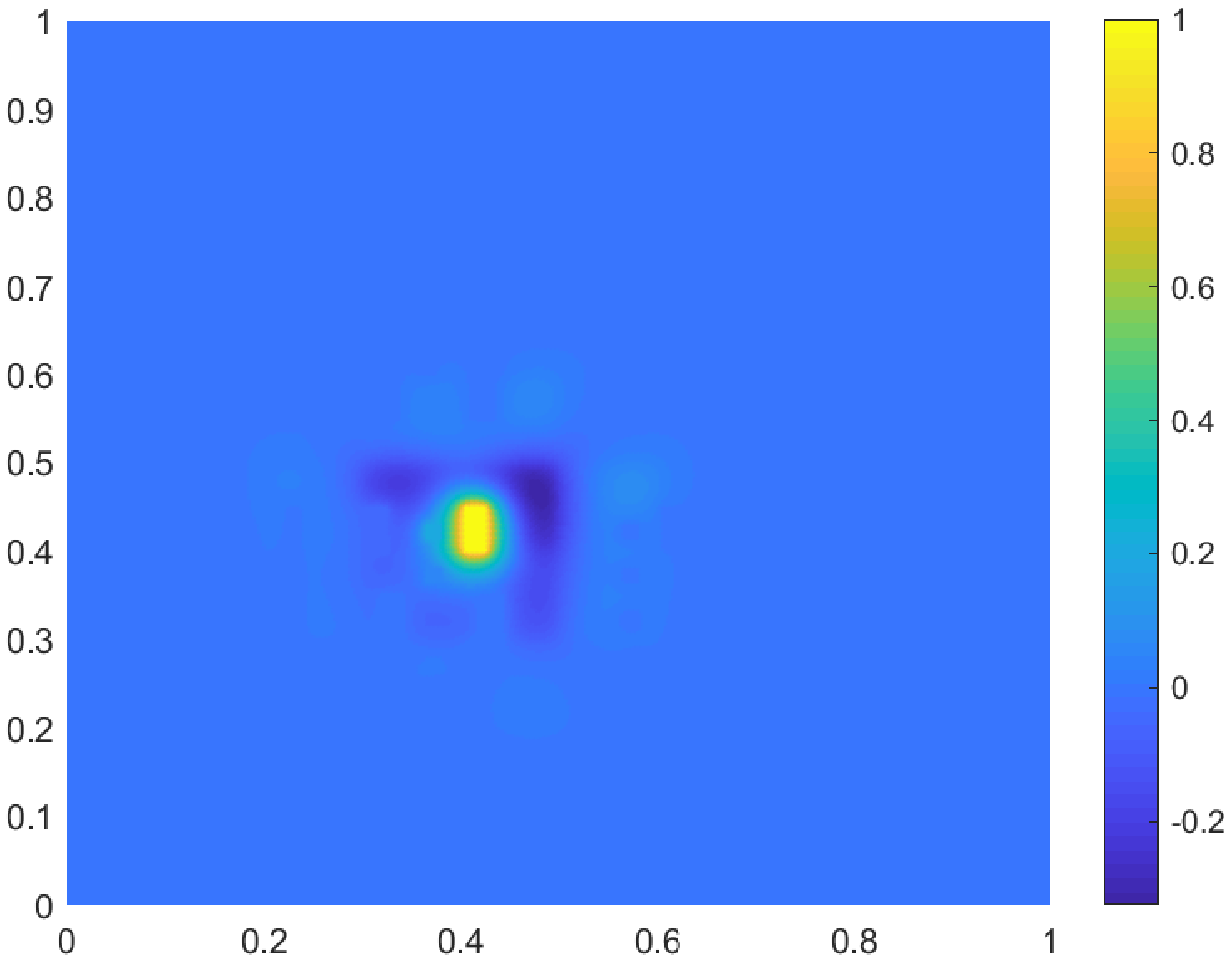}
}
\caption{An illustration of the decay property of the
 multiscale basis function. Left: a high contrast medium.
  Right: a multiscale basis function.}
\label{multiscale-basis}
\end{figure}

\subsection{Computational issue}

For the convenience of the readers, we write down the implementation of the proposed method as follows.
\begin{enumerate}
  \item Calculate the multiscale basis functions $\psi_{i,ms}^{(j)}$ by solving \eqref{multiscale1}-\eqref{multiscale2} for each region $K_i^j, i=1\cdots,N, j=1,\cdots,l_i$.
  \item Generate the projection matrix
  \begin{align*}
  R^T=[\psi_{1}^{(1)}\cdots,\psi_{1,ms}^{(l_1)},\cdots,\psi_{N,ms}^{(1)},\cdots,\psi_{N,ms}^{(l_N)}],
  \end{align*}
  where $\psi_{i,ms}^{(j)}$ is a column vector using its representation in the fine grid.
  \item Construct the coarse grid system
  \begin{align*}
  RAR^T \bar{u}=Rb
  \end{align*}
  and solve the above equation to get $\bar{u}$.
\end{enumerate}
Note that the downscale solution can be defined by $u_{ms}=R^T \bar{u}$. Our coarse grid solutions have physical meaning, which is the average value of the solution on each region $K_i^j$.

\section{Error analysis}\label{sec:error}

In this section, we will carry out the error analysis for the proposed method. We first show the convergence of the global basis function defined in \eqref{global-multiscale}, then we show the decay property of the local multiscale basis function, using which the convergence of the multiscale solution can be obtained.

\subsection{Convergence}

This subsection presents the convergence of the approximate solution obtained in \eqref{eq:weak} as stated in the next lemma.
\begin{lemma}\label{lemma:conglo}
Let $u$ be the solution in \eqref{eq:weak} and $u_{glo}$ be the solution in \eqref{eq:global},
then we have
\begin{align*}
\|u-u_{glo}\|_a&\leq C HC_{ratio}^{1/2}\|\kappa^{-1/2}f\|_0.
\end{align*}

\end{lemma}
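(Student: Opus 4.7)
The plan is to exploit the fact that $V_{glo}$ and $\tilde V$ are $a$-orthogonal complements inside $H^1_0(\Omega)$. Since $u_{glo}$ is the Galerkin projection of $u$ onto $V_{glo}$, the error $u-u_{glo}$ satisfies $a(u-u_{glo},w)=0$ for every $w\in V_{glo}$, hence $u-u_{glo}\in V_{glo}^\perp=\tilde V$. In particular $\pi(u-u_{glo})=0$, so $u-u_{glo}$ has vanishing mean on every region $K_i^j$ — this is the key structural fact that will produce the $H$ factor.

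Next, I would test the continuous equation \eqref{eq:weak} against $v:=u-u_{glo}\in H^1_0(\Omega)$ and use $a(u_{glo},v)=0$ (which follows from the same $a$-orthogonality, this time with $u_{glo}\in V_{glo}$ and $v\in\tilde V$) to get the identity
\begin{align*}
\|u-u_{glo}\|_a^2 \;=\; a(u-u_{glo},v)\;=\;a(u,v)\;=\;(f,v).
\end{align*}
The task is then to bound $(f,v)$ by $CHC_{ratio}^{1/2}\,\|\kappa^{-1/2}f\|_0\,\|v\|_a$.

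For this, I would split the integral regionwise, write $(f,v)=\sum_{i,j}\int_{K_i^j}(\kappa^{-1/2}f)(\kappa^{1/2}v)\,dx$, and apply Cauchy–Schwarz on each region. On $K_i^j$, where $\kappa_0\le\kappa\le\kappa_1$, I would use the mean-zero condition $\pi_{ij}(v)=0$ together with a standard Poincaré inequality to get $\|v\|_{L^2(K_i^j)}\le CH\|\nabla v\|_{L^2(K_i^j)}$, and then swap between the unweighted gradient and the $a$-norm using the local bounds on $\kappa$:
\begin{align*}
\|\kappa^{1/2}v\|_{L^2(K_i^j)}\le \kappa_1^{1/2}\|v\|_{L^2(K_i^j)}\le CH\kappa_1^{1/2}\|\nabla v\|_{L^2(K_i^j)}\le CH\Bigl(\tfrac{\kappa_1}{\kappa_0}\Bigr)^{1/2}\|v\|_{a(K_i^j)}.
\end{align*}
This is precisely where the factor $(C_{ratio}^{i,j})^{1/2}$ enters, and it is the only step where the high contrast plays a role — I expect this to be the main (if not very hard) conceptual obstacle: recognising that one must weight the right-hand side by $\kappa^{-1/2}$ and the test function by $\kappa^{1/2}$ so that the ratio $\kappa_1/\kappa_0$ (rather than $\kappa_{\max}/\kappa_{\min}$) governs the constant.

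Finally, I would bound $(C_{ratio}^{i,j})^{1/2}\le C_{ratio}^{1/2}$ uniformly, sum over $i,j$, and apply discrete Cauchy–Schwarz to conclude
\begin{align*}
(f,v)\;\le\;CHC_{ratio}^{1/2}\,\|\kappa^{-1/2}f\|_0\,\|v\|_a,
\end{align*}
which combined with the energy identity above yields the stated estimate. The argument is standard Céa/duality in spirit; the only subtlety is the contrast-aware Poincaré estimate on each region, which uses crucially that $\kappa$ has bounded variation inside each $K_i^j$ even though it may vary wildly between regions.
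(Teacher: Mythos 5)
Your proposal is correct and follows essentially the same route as the paper's proof: Galerkin orthogonality gives $u-u_{glo}\in\tilde V$, the energy identity reduces the problem to bounding $(f,u-u_{glo})$, and the regionwise mean-zero property plus the Poincar\'{e} inequality on each $K_i^j$ produces the factor $HC_{ratio}^{1/2}$. Your write-up is in fact more explicit than the paper's about where the local contrast ratio $\kappa_1/\kappa_0$ enters when converting between the weighted and unweighted norms, but the underlying argument is identical.
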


\begin{proof}

By the definitions of $u$ and $u_{glo}$, we have
\begin{align*}
a(u,v)&=(f,v)\quad \forall v\in H^1_0(\Omega),\\
a(u_{glo},v)&=(f,v)\quad \forall v\in V_{glo}.
\end{align*}
Combining these two equations, we can get
\begin{align*}
a(u-u_{glo},v)=0\quad \forall v\in V_{glo}.
\end{align*}
So, we have $u-u_{glo}\in V_{glo}^{\perp}=\tilde{V}$. It then follows that
\begin{align*}
a(u-u_{glo},u-u_{glo})=a(u,u-u_{glo})=(f,u-u_{glo})\leq \|\kappa^{-1/2}f\|_0\|\kappa^{1/2}(u-u_{glo})\|_0,
\end{align*}
Since $\pi(u-u_{glo})=0$, the Poincar\'{e} inequality yields
\begin{align*}
\int_{K_i^j}(u-u_{glo})^2\leq C H^2\int_{K_i^j}|\nabla (u-u_{glo})|^2.
\end{align*}
Therefore, the preceding arguments reveal that
\begin{align*}
\|u-u_{glo}\|_a^2\leq CHC_{ratio}^{1/2}\|\kappa^{-1/2}f\|_0 \|u-u_{glo}\|_a,
\end{align*}
which gives the desired estimate.


\end{proof}

\subsection{Decay property of the multiscale basis functions}

This section aims to proving the global basis functions are localizable. To this end,
for each coarse block $K$, we define $B$ to be a bubble function and $B\mid_{\tau}=\frac{\varphi_1\varphi_2\varphi_3}{27},\forall \tau\in \mathcal{T}_h(K)$, where $\varphi_i$ is barycentric coordinates and $\mathcal{T}_h(K)$ denotes the fine grids restricted to $K$, and more information regarding the bubble function $B$ can be found in \cite{Verfurth96}.


The next lemma considers the following minimization problem defined on a coarse block $K_i$:
\begin{align}
v_{i}^{(j)}=\arg\min\{a(q_i^{(j)},q_i^{(j)})|q_i^{(j)}\in H^1_0(K_i), \pi_{il}(q_i^{(j)})=v_{aux}\; \forall l,=1\cdots,l_i\}\label{eq:minimizationK}
\end{align}
for a given $v_{aux}\in Q_h(K_i)$.

\begin{lemma}\label{lemma:saddle}
For all $v_{aux}\in Q_h$, there exists a function $v\in H^1_0(\Omega) $ such that
\begin{align*}
\pi(v)=v_{aux},\quad \|v\|_a^2\leq D\|\kappa ^{1/2}v_{aux}\|_0^2,\quad \mbox{supp}(v)\subset \mbox{supp}(v_{aux}).
\end{align*}

\end{lemma}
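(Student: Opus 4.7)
The plan is to build $v$ block-by-block and then sum. Since $v_{aux}\in Q_h$ is piecewise constant on the regions $K_i^l$, denote its value there by $c_{i,l}$. For every coarse block $K_i$ on which some $c_{i,l}$ is nonzero, let $v_i\in H^1_0(K_i)$ be the minimizer of the local constrained energy problem \eqref{eq:minimizationK} with targets $\pi_{il}(v_i)=c_{i,l}$ for $l=1,\ldots,l_i$; on all other blocks take $v_i=0$. Extending each $v_i$ by zero outside $K_i$ and setting $v=\sum_i v_i$, the zero-trace condition on $\partial K_i$ guarantees $v\in H^1_0(\Omega)$. The prescribed constraints give $\pi(v)=v_{aux}$, and only blocks touching $\mbox{supp}(v_{aux})$ contribute, so the support containment holds at the granularity of coarse blocks.

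The nontrivial ingredient is the energy bound $\|v\|_a^2\le D\|\kappa^{1/2} v_{aux}\|_0^2$. Because the pieces $v_i$ live in disjoint coarse blocks, it suffices to establish the block-local inequality $\|v_i\|_{a(K_i)}^2\le D\|\kappa^{1/2} v_{aux}\|_{0,K_i}^2$ and sum. I would exhibit an explicit competitor $\tilde v_i\in H^1_0(K_i)$ meeting the same mean-value constraints, then invoke minimality to obtain $\|v_i\|_{a(K_i)}^2\le\|\tilde v_i\|_{a(K_i)}^2$. Taking $\tilde v_i:=\sum_{l=1}^{l_i}\alpha_{i,l}\,B\chi_{K_i^l}$ with $\alpha_{i,l}:=c_{i,l}|K_i^l|/\int_{K_i^l}B$ fits the bill: since the bubble $B$ vanishes on every fine-element boundary and each region $K_i^l$ is a union of fine elements, $B\chi_{K_i^l}\in H^1_0(K_i^l)$; and the scaling of $\alpha_{i,l}$ ensures $\pi_{il}(\tilde v_i)=c_{i,l}$.

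The main obstacle is converting the bubble energy $\int_{K_i^l}\kappa|\nabla B|^2$ into a multiple of the weighted mass $\int_{K_i^l}\kappa\,c_{i,l}^2$ appearing on the right-hand side. The crucial reduction uses the small-contrast hypothesis: within each region $K_i^l$ the coefficient satisfies $\kappa_0\le\kappa\le\kappa_1$ with $\kappa_1/\kappa_0\le C_{ratio}$, so $\kappa$ can be pulled out of the integrals up to a factor depending only on $C_{ratio}$. Combining this with standard $L^2$ and $H^1$ bounds for the bubble $B$ over the fine partition of $K_i^l$ yields $\|\tilde v_i\|_{a(K_i^l)}^2\le D\|\kappa^{1/2} v_{aux}\|_{0,K_i^l}^2$, where $D$ depends on $C_{ratio}$ and on the mesh constants from the bubble inequalities but not on $\kappa$ itself. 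Summing over $i$ and $l$ delivers the stated inequality.
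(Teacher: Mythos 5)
Your proposal is correct and rests on the same key device as the paper's own proof, namely pairing the piecewise-constant $v_{aux}$ with the fine-grid bubble function $B$ on each coarse block and summing the disjointly supported pieces. The only difference is in packaging: the paper verifies the inf-sup condition for the saddle-point formulation using the candidate $Bv_{aux}$ and invokes mixed finite element theory for the stability of the minimizer, whereas you rescale the bubble region-by-region so that your competitor satisfies the mean-value constraints exactly and then conclude by energy minimality --- a slightly more elementary route that produces the same constant $D$ (proportional to $C_{ratio}$ and mesh-dependent through the bubble's inverse inequality).
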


\begin{proof}
Let $v_{aux}\in Q_h(K_i)$. The minimization problem is equivalent to the following variational problem: find $v_i^{(j)}\in H^1_0(K_i)$ and $\mu\in Q_h(K_i)$ such that
\begin{align}
a_i(v_i^{(j)},w)+\sum_{K_i^l\subset K_i}\mu_l\int_{K_i^l}w\;dx
&=0\quad \forall w\in H^1_0(K_i),\label{eq:minimizationK1}\\
\int_{K_i^l}v_i^{(j)}\;dx&=\int_{K_i^l}v_{aux}\;dx\quad \forall l=1,\cdots,l_i.\label{eq:minimizationK2}
\end{align}
Let $s_i(v,v_{aux})=\sum_{K_i^l\subset K_i}\int_{K_i^l}v v_{aux}\;dx$.
Note that, by the mixed finite element theory (cf. \cite{BrezziFortin91}), the well-posedness of the minimization problem
is equivalent to the existence of a function $v\in H^1_0(K_i)$ such that
\begin{align*}
s_i(v,v_{aux})\geq C \|v_{aux}\|_{0,K_i}^2, \quad \|v\|_{a(K_i)}\leq C \|v_{aux}\|_{0,K_i}.
\end{align*}
Note that $v_{aux}$ is supported in $K_i$. We let $v=Bv_{aux}$. By the definition of $s_i$, we have
\begin{align*}
s_i(v,v_{aux})=\sum_{K_i^l\subset K_i}\int_{K_i^l}Bv_{aux}^2\geq C \|v_{aux}\|_{0,K_i}^2.
\end{align*}
In addition,
\begin{align*}
\|v\|_{a(K_i)}^2=\|Bv_{aux}\|_{a(K_i)}^2\leq C \|v\|_{a(K_i)}\|\kappa^{1/2}v_{aux}\|_{0,K_i},
\end{align*}
Thus
\begin{align*}
\|v\|_{a(K_i)}\leq C \|\kappa^{1/2}v_{aux}\|_{0,K_i}
\end{align*}
and the minimization problem \eqref{eq:minimizationK} has a unique solution $v\in H^1_0(K)$. Therefore, $v$ and $v_{aux}$ satisfy \eqref{eq:minimizationK1}-\eqref{eq:minimizationK2}. From \eqref{eq:minimizationK2}, we can obtain $\pi_{il}(v)=v_{aux}$. The  assertion follows.

\end{proof}

The rest of this section attempts to estimating the difference between the global and multiscale basis
functions. For this purpose, we first introduce some notations used for the subsequent analysis.
We define the cutoff function with respect to these oversampling domains.
For each $K_i$, we recall that $K_{i,m}$ is the oversampling coarse region by
enlarging $K_i$ by $m$ coarse grid layers. For $M>m$, we
define $\chi_{i}^{M,m}\in \mbox{span}\{\chi_i^{ms}\}$ such that $0\leq \chi_i^{M,m}\leq 1$
and
\begin{align}
\chi_i^{M,m}&=1 \quad \mbox{in}\;K_{i,m},\label{cut1}\\
\chi_i^{M,m}&=0\quad \mbox{in}\;\Omega\backslash K_{i,M}\label{cut2}.
\end{align}
Note that we have $K_{i,m}\subset K_{i,M}$ and $\{\chi_{i}^{ms}\}_{i=1}^N$ are the standard  multiscale finite element (MsFEM) basis functions (cf. \cite{HouWu97}).

The next lemma shows the difference between the global and multiscale basis functions,
which will play an important role in the proof of the convergence of the multiscale solution.

\begin{lemma}\label{lemma:decay}
We consider the oversampled domain $K_{i,k}$ with $k\geq 2$. That is, $K_{i,k}$ is an oversampled region by enlarging $K_i$ by $k$ grid layers.
Let $\delta_{lj}$ be the Dirac delta function. We let $\psi_{i,ms}^{(j)}$ be the multiscale basis functions obtained in \eqref{multiscale1}-\eqref{multiscale2} and let $\psi_i^{(j)}$ be the global multiscale basis functions obtained in \eqref{global-multiscale}. Then we have
\begin{align*}
\|\psi_i^{(j)}-\psi_{i,ms}^{(j)}\|_a^2\leq CE\|\kappa^{1/2}\delta_{lj}\|_{0,K_i}\quad \forall l,j=1\cdots,l_i
\end{align*}
and
\begin{align}
E = D^2(1+C_{ratio}H^2)(1+\frac{1}{2D^{1/2}HC_{ratio}^{1/2}})^{1-k}.\label{eq:E}
\end{align}

\end{lemma}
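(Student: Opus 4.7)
The plan is to prove exponential decay of the global basis function $\psi_i^{(j)}$ away from the coarse block $K_i$, and then transfer this decay to the difference $\psi_i^{(j)} - \psi_{i,ms}^{(j)}$ via the energy-minimization property that characterizes $\psi_{i,ms}^{(j)}$. The three key ingredients are: (i) the Galerkin orthogonality $a(\psi_i^{(j)},v)=0$ for all $v\in\tilde V$, (ii) the bubble-function corrector of Lemma \ref{lemma:saddle}, which converts a mean-value mismatch into an admissible perturbation with controlled $a$-norm, and (iii) the cutoff functions $\chi_i^{M,m}$ from \eqref{cut1}-\eqref{cut2}, which satisfy $|\nabla\chi_i^{M,m}|\leq C/H$ on the annulus $K_{i,M}\setminus K_{i,m}$.

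First I would establish a Caccioppoli-type recursion on the annular energy of the global basis function. Fix $1\leq s\leq k-1$ and set $w=(1-\chi_i^{s,s-1})\psi_i^{(j)}$, which is supported on $\Omega\setminus K_{i,s-1}$ but generally has nonzero projection $\pi(w)$ on some regions $K_l^n$ lying in the annulus $K_{i,s}\setminus K_{i,s-1}$. Using Lemma \ref{lemma:saddle} I would construct a corrector $\xi$ supported on that annulus with $\pi(\xi)=\pi(w)$ and $\|\xi\|_a^2\leq D\|\kappa^{1/2}\pi(w)\|_0^2$. Then $w-\xi\in\tilde V$, so $a(\psi_i^{(j)},w-\xi)=0$. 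Expanding this identity, bounding the cutoff-gradient terms by $C/H$, and using the local Poincar\'e inequality $\int_{K_l^n}(\psi_i^{(j)})^2\leq CH^2 C_{ratio}^{l,n}\kappa_0^{-1}\int_{K_l^n}\kappa|\nabla\psi_i^{(j)}|^2$ (where the contrast ratio enters because $\kappa$ varies by factor $C_{ratio}$ inside each region), I would arrive at
\begin{equation*}
\|\psi_i^{(j)}\|_{a(\Omega\setminus K_{i,s})}^2 \leq \gamma\,\|\psi_i^{(j)}\|_{a(\Omega\setminus K_{i,s-1})}^2,\qquad \gamma=\Bigl(1+\frac{1}{2D^{1/2}HC_{ratio}^{1/2}}\Bigr)^{-1}.
\end{equation*}
Iterating this recursion from $s=1$ up to $s=k-1$ yields the decaying factor $\gamma^{k-1}=(1+\tfrac{1}{2D^{1/2}HC_{ratio}^{1/2}})^{1-k}$ that appears in \eqref{eq:E}.

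Next I would use the variational characterization of $\psi_{i,ms}^{(j)}$ as the minimizer of $\|\cdot\|_a$ over $H^1_0(K_{i,k})$ subject to the same mean-value constraints as $\psi_i^{(j)}$. The proposed competitor is $\tilde v=\chi_i^{k,k-1}\psi_i^{(j)}+\zeta$, where $\zeta$ is another corrector produced by Lemma \ref{lemma:saddle} to restore the mean values perturbed by the cutoff. Then $\|\psi_i^{(j)}-\psi_{i,ms}^{(j)}\|_a\leq\|\psi_i^{(j)}-\tilde v\|_a+\|\psi_{i,ms}^{(j)}-\tilde v\|_a$, and both terms can be controlled by $\|\psi_i^{(j)}\|_{a(\Omega\setminus K_{i,k-1})}^2$ via cutoff-gradient and corrector bounds. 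Combining this with the annular decay and the base-case estimate $\|\psi_i^{(j)}\|_a^2\leq C(1+C_{ratio}H^2)\|\kappa^{1/2}\delta_{lj}\|_{0,K_i}^2$ (obtained by testing \eqref{global-multiscale} against its own minimizer and again invoking Lemma \ref{lemma:saddle} to produce an admissible trial function) gives the asserted bound with the advertised $E$.

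The main obstacle will be the careful accounting of constants in the recursion and in the final triangle inequality. In particular, one must track how $C_{ratio}$ enters through the local Poincar\'e step, how the bubble constant $D$ from Lemma \ref{lemma:saddle} compounds at each iteration, and how to choose the weights in the Cauchy--Schwarz splitting on the annulus so that the cutoff-gradient term $(1/H)^2$ and the local Poincar\'e factor $H^2 C_{ratio}$ combine to produce precisely the decay ratio $\gamma$ written above. This balancing is what forces the $D^{1/2}HC_{ratio}^{1/2}$ scale inside the exponential factor and the $D^2(1+C_{ratio}H^2)$ prefactor in \eqref{eq:E}.
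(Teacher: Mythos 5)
Your proposal follows essentially the same route as the paper: an iterated annular cutoff-plus-corrector (Caccioppoli-type) recursion yielding exactly the decay factor $\bigl(1+\tfrac{1}{2D^{1/2}HC_{ratio}^{1/2}}\bigr)^{-1}$, combined with a transfer to $\psi_i^{(j)}-\psi_{i,ms}^{(j)}$ via the local energy minimization; the paper merely runs the recursion on $\eta=\psi_i^{(j)}-\tilde{\phi}_i^{(j)}$ (with $\tilde{\phi}_i^{(j)}$ the compactly supported representative from Lemma~\ref{lemma:saddle}) rather than on $\psi_i^{(j)}$ itself, and replaces your triangle inequality by the single bound $\|\psi_i^{(j)}-\psi_{i,ms}^{(j)}\|_a\le\|\eta-v\|_a$. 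The one step you gloss over is the control of $\|\psi_{i,ms}^{(j)}-\tilde v\|_a$, which is not obtained from cutoff-gradient or corrector bounds but from the orthogonality $a(\psi_i^{(j)}-\psi_{i,ms}^{(j)},w)=0$ for all $w\in\tilde{V}_0(K_{i,k})$ (subtract the two saddle-point systems), which gives $\|\psi_{i,ms}^{(j)}-\tilde v\|_a\le\|\psi_i^{(j)}-\tilde v\|_a$ and so closes your argument with the ingredients you already listed.
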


\begin{proof}

For the given $\delta_{lj}\in Q_h$, by Lemma~\ref{lemma:saddle}, there exists a $\tilde{\phi}_i^{(j)}\in H^1_0(\Omega)$ such that
\begin{align}
\pi_{il}(\tilde{\phi}_i^{(j)})=\delta_{lj}, \quad \|\tilde{\phi}_i^{(j)}\|_a^2\leq D\|\kappa^{1/2}\delta_{lj}\|_0^2\quad \mbox{and}\quad \mbox{supp}(\tilde{\phi}_i^{(j)})\subset K_i.\label{eq:phit}
\end{align}
We let $\eta =\psi_i^{(j)}-\tilde{\phi}_i^{(j)}$, then we have $\pi(\eta)=0$. Therefore, $\eta\in \tilde{V}$. We see that $\psi_i^{(j)}$ and $\psi_{i,ms}^{(j)}$ satisfy
\begin{align}
a(\psi_i^{(j)},v)+\sum_{K_i^l\subset \Omega}\mu_i^{(l)}\int_{K_i^l}v\;dx
=0\quad \forall v\in H^1_0(\Omega)\label{eq:eq1}
\end{align}
and
\begin{align}
a(\psi_{i,ms}^{(j)},v)+\sum_{K_i^l\subset K_{i,k}}\mu_{i,ms}^{(l)}\int_{K_i^l}v\;dx
=0\quad \forall v\in H^1_0(K_{i,k})\label{eq:eq2}
\end{align}
for some $\mu_i^{(l)}\in Q_h$, $\mu_{i,ms}^{(l)}\in Q_h(K_{i,k})$. Subtracting the above two equations and restricting $v\in \tilde{V}_0(K_{i,k})$, we have
\begin{align*}
a(\psi_{i}^{(j)}-\psi_{i,ms}^{(j)},v)=0\quad \forall v\in \tilde{V}_0(K_{i,k}).
\end{align*}
Here, we have $\tilde{V}_0(K_{i,k})=\{v\in H^1_0(K_{i,k})|\pi(v)=0\}$. Therefore,
for $v\in \tilde{V}_0(K_{i,k})$, we can get
\begin{align*}
\|\psi_i^{(j)}-\psi_{i,ms}^{(j)}\|_a^2&
=a(\psi_i^{(j)}-\psi_{i,ms}^{(j)},\psi_i^{(j)}-\psi_{i,ms}^{(j)})\\
&=a(\psi_i^{(j)}-\psi_{i,ms}^{(j)},
\psi_i^{(j)}-\tilde{\phi}_i^{(j)}-\psi_{i,ms}^{(j)}+\tilde{\phi}_i^{(j)})
=a(\psi_i^{(j)}-\psi_{i,ms}^{(j)},\eta-v),
\end{align*}
where $-\psi_{i,ms}^{(j)}+\tilde{\phi}_i^{(j)}\in \tilde{V}_0(K_{i,k})$. Thus, we obtain
\begin{align}
\|\psi_i^{(j)}-\psi_{i,ms}^{(j)}\|_a\leq \|\eta-v\|_a.\label{eq:decay}
\end{align}
Now, we will estimate $\|\psi_i^{(j)}-\psi_{i,ms}^{(j)}\|_a$. We consider the $i$th coarse
 block $K_i$. For this block, we consider two oversampled regions $K_{i,k-1}$ and $K_{i,k}$.
Using these two overampled regions, we define the cutoff function $\chi_i^{k,k-1}$
with the properties in \eqref{cut1}-\eqref{cut2}, where we take $m=k-1$ and $M=k$.
For any coarse block $K_j\subset K_{i,k-1}$ by \eqref{cut1},
 we have $\chi_{i}^{k,k-1}\equiv 1$ on $K_j$. Since $\eta\in \tilde{V}$, we have
\begin{align*}
\sum_{K_j^n\subset K_j}\int_{K_j^n}\chi_i^{k,k-1}\eta=
\sum_{K_j^n\subset K_j}\int_{K_j^n}\eta=0.
\end{align*}

From the above result and the fact that $\chi_i^{k,k-1}\equiv 0$ in $\Omega\backslash K_{i,k}$, we have
\begin{align*}
\mbox{supp}(\pi(\chi_i^{k,k-1}\eta))\subset K_{i,k}\backslash K_{i,k-1}.
\end{align*}

By Lemma~\ref{lemma:saddle}, for the function $\pi(\chi_i^{k,k-1}\eta)$, there is $\mu\in H^1_0(\Omega)$ such that $\mbox{supp}(\mu)\subset K_{i,k}\backslash K_{i,k-1}$ and $\pi(\mu-\chi_i^{k,k-1}\eta)=0$. Moreover, it also follows from Lemma~\ref{lemma:saddle},
 the definition of $\pi$ and the Cauchy-Schwarz inequality that
\begin{align}
\|\mu\|_{a(K_{i,k}\backslash K_{i,k-1})}\leq D^{1/2}\|\kappa^{1/2}\pi(\chi_i^{k,k-1}\eta)\|_{0,K_{i,k}\backslash K_{i,k-1}}\leq D^{1/2}\|\kappa^{1/2}\chi_i^{k,k-1}\eta\|_{0,K_{i,k}\backslash K_{i,k-1}},\label{eq:muineq}
\end{align}
Hence, taking $v=\mu+\chi_i^{k,k-1}\eta$ in \eqref{eq:decay}, we can obtain
\begin{align}
\|\psi_i^{(j)}-\psi_{i,ms}^{(j)}\|_a\leq \|\eta-v\|_a\leq \|(1-\chi_i^{k,k-1})\eta\|_a+\|\mu\|_{a(K_{i,k}\backslash K_{i,k-1})}.\label{eq:decay2}
\end{align}
Next, we will estimate the two terms on the right hand side of \eqref{eq:decay2}.

Step 1: We first estimate the first term in \eqref{eq:decay2}. By a direct computation, we have
\begin{align*}
\|(1-\chi_i^{k,k-1})\eta\|_a^2\leq 2\Big(\int_{\Omega\backslash K_{i,k-1}}\kappa(1-\chi_{i}^{k,k-1})^2|\nabla \eta|^2+\int_{\Omega\backslash K_{i,k-1}}\kappa |\nabla \chi_i^{k,k-1}|^2\eta^2\Big).
\end{align*}
Note that, we have $1-\chi_i^{k,k-1}\leq 1$. For the second term on the righ hand side of the above inequality, we will use the fact that $\eta\in \tilde{V}$ and the Poincar\'{e} inequality
\begin{align*}
\|(1-\chi_i^{k,k-1})\eta\|_a^2\leq 2
(1+H^2C_{ratio})\int_{\Omega\backslash K_{i,k-1}}\kappa |\nabla \eta|^2.
\end{align*}
We will estimate the right hand side in Step 3.

Step 2: We will estimate the second term on the right hand side of \eqref{eq:decay2}. By \eqref{eq:muineq}, the fact that $|\chi_i^{k,k-1}|\leq 1$ and the Poincar\'{e} inequality, we have
\begin{align*}
\|\mu\|_{a(K_{i,k}\backslash K_{i,k-1})}^2\leq D \|\kappa^{1/2}\chi_i^{k,k-1}\eta\|_{0,K_{i,k}\backslash K_{i,k-1}}^2\leq D H^2 C_{ratio} \int_{K_{i,k}\backslash K_{i,k-1}}\kappa |\nabla \eta|^2.
\end{align*}
Combining Steps 1 and 2, we obtain
\begin{align}
\|\psi_i^{(j)}-\psi_{i,ms}^{(j)}\|_a^2\leq 2D(1+C_{ratio}H^2) \|\eta\|_{a(\Omega\backslash K_{i,k-1})}^2.\label{eq:step12}
\end{align}

Step 3: Finally, we will estimate the term $\|\eta\|_{a(\Omega\backslash K_{i,k-1})}$. We will first show that the following recursive inequality holds
\begin{align}
\|\eta\|_{a(\Omega\backslash K_{i,k-1})}\leq (1+\frac{1}{2HD^{1/2}C_{ratio}^{1/2}})^{-1}\|\eta\|_{a(\Omega\backslash K_{i,k-2})}^2,\label{eq:recursive}
\end{align}
where $k-2\geq 0$. Using \eqref{eq:recursive} in \eqref{eq:step12}, we can get
\begin{align}
\|\psi_i^{(j)}-\psi_{i,ms}^{(j)}\|_a^2\leq 2D(1+C_{ratio}H^2)(1+\frac{1}{2HD^{1/2}C_{ratio}^{1/2}})^{-1}\|\eta\|_{a(\Omega\backslash K_{i,k-2})}^2.\label{eq:step12-recursive}
\end{align}
By using \eqref{eq:recursive} again in \eqref{eq:step12-recursive}, we can obtain
\begin{align*}
\|\psi_i^{(j)}-\psi_{i,ms}^{(j)}\|_a^2&\leq 2D(1+C_{ratio}H^2)(1+\frac{1}{2HD^{1/2}C_{ratio}^{1/2}})^{1-k}\|\eta\|_{a(\Omega\backslash K_{i})}^2\\
&\leq 2D(1+C_{ratio}H^2)(1+\frac{1}{2HD^{1/2}C_{ratio}^{1/2}})^{1-k}\|\eta\|_{a}^2.
\end{align*}
By employing the definition of $\eta$, the energy minimizing property of $\psi_j^{(i)}$ and Lemma~\ref{lemma:saddle}, we have
\begin{align*}
\|\eta\|_a^2=\|\psi_{i}^{(j)}-\tilde{\phi}_i^{(j)}\|_a\leq 2\|\tilde{\phi}_i^{(j)}\|_a\leq 2D^{1/2}\|\kappa^{1/2}\delta_{lj}\|_{0,K_i}\;\forall l,j=1,\cdots,l_i.
\end{align*}
Step 4: We will prove the estimate \eqref{eq:recursive}. Let $\xi=1-\chi_i^{k-1,k-2}$. Then we see that $\xi\equiv 1$ in $\Omega\backslash K_{i,k-1}$ and $0\leq \xi\leq 1$ otherwise. Then we have
\begin{align}
\|\eta\|_{a(\Omega\backslash K_{i,k-1})}^2\leq \int_\Omega \kappa \xi^2|\nabla \eta|^2=\int_\Omega \kappa \nabla \eta\cdot \nabla (\xi^2 \eta)-2\int_\Omega \kappa \xi \eta \nabla \xi \nabla \eta.\label{eq:eta}
\end{align}
We estimate the first term in \eqref{eq:eta}. For the function $\pi(\xi^2\eta)$, using Lemma~\ref{lemma:saddle}, there exists $\gamma\in H^1_0(\Omega)$ such that $\pi(\gamma)=\pi(\xi^2\eta)$ and $\mbox{supp}(\gamma)\subset \mbox{supp}(\pi(\xi^2\eta))$. For any coarse elements $K_m\subset \Omega\backslash K_{i,k-1}$, since $\xi\equiv 1$ on $K_m$, we have for any $\phi_m^{(n)}\in Q_h(K_m)$
\begin{align*}
s_m(\xi^2\eta, \phi_m^{(n)})=0\quad \forall n=1,\ldots, l_m.
\end{align*}
On the other hand, since $\xi\equiv 0$ in $K_{i,k-2}$, we have
\begin{align*}
s_m(\xi^2\eta, \phi_m^{(n)})=0\quad \forall n=1,\ldots, l_m,\; \forall K_m\subset K_{i,k-2}.
\end{align*}
From the above two conditions, we see that $\mbox{supp}(\pi(\xi^2\eta))\subset K_{i,k-1}\backslash K_{i,k-2}$ and consequently $\mbox{supp}(\gamma)\subset K_{i,k-1} \backslash K_{i,k-2}$. Note that, since $\pi(\gamma)=\pi(\xi^2\eta)$, we have $\xi^2\eta-\gamma \in \tilde{V}$. We also note that $\mbox{supp}(\xi^2\eta-\gamma)\subset \Omega\backslash K_{i,k-2}$. By \eqref{eq:phit}, the functions $\tilde{\phi}_i^{(j)}$ and $\xi^2\eta-\gamma$ have disjoint supports, so $a(\tilde{\phi}_i^{(j)}, \xi^2\eta-\gamma)=0$. Then, by the definition of $\eta$, we have
\begin{align*}
a(\eta, \xi^2\eta-\gamma)=a(\psi_j^{(i)},\xi^2\eta-\gamma).
\end{align*}
By the construction of $\psi_i^{(j)}$, we have $a(\psi_i^{(j)},\xi^2\eta-\gamma)=0$. Then we can estimate the first term in \eqref{eq:eta} by the Cauchy-Schwarz inequality and Lemma~\ref{lemma:saddle}
\begin{align*}
\int_\Omega \kappa \nabla \eta\cdot \nabla(\xi^2\eta)&=\int_\Omega\kappa \nabla \eta\cdot \nabla \gamma\\
&\leq D^{1/2} \|\eta\|_{a(K_{i,k-1}\backslash K_{i,k-2})}\|\kappa^{1/2}\pi(\xi^2\eta)\|_{0,K_{i,k-1}\backslash K_{i,k-2}}.
\end{align*}

For all coarse elements $K\subset K_{i,k-1}\backslash K_{i,k-2}$ and assume
that $\kappa\leq \kappa_1$ within $K$, since $\pi(\eta)=0$, we have from the Poincar\'{e}
 inequality that
\begin{align*}
\|\kappa^{1/2}\pi(\xi^2\eta)\|_{0,K}^2\leq \kappa_1\|\xi^2\eta\|_{0,K}^2\leq C_{ratio} H^2\int_K \kappa |\nabla \eta|^2.
\end{align*}

Summing the above over all coarse elements $K\subset K_{i,k-1}\backslash K_{i,k-2}$, we have
\begin{align*}
\|\kappa^{1/2}\pi(\xi^2\eta)\|_{0,K_{i,k-1}\backslash K_{i,k-2}}\leq C_{ratio}^{1/2} H \|\eta\|_{a(K_{i,k-1}\backslash K_{i,k-2})}.
\end{align*}
To estimate the second term in \eqref{eq:eta}, we have from the Poincar\'{e} inequality
\begin{align*}
2\int_\Omega \kappa \xi\eta\nabla \xi\cdot\nabla \eta\leq 2 \|\kappa^{1/2} \eta\|_{0,K_{i,k-1}\backslash K_{i,k-2}}\|\eta\|_{a(K_{i,k-1}\backslash K_{i,k-2})}\leq 2HC_{ratio}^{1/2}\|\eta\|_{a(K_{i,k-1}\backslash K_{i,k-2})}^2.
\end{align*}
Hence, the preceding arguments yield the upper bound for \eqref{eq:eta}
\begin{align*}
\|\eta\|_{a(\Omega\backslash K_{i,k-1})}^2\leq 2C_{ratio}^{1/2}D^{1/2}H\|\eta\|_{a(K_{i,k-1}\backslash K_{i,k-2})}^2.
\end{align*}
Thus
\begin{align*}
\|\eta\|_{a(\Omega\backslash K_{i,k-2})}^2=\|\eta\|_{a(\Omega\backslash K_{i,k-1})}^2+\|\eta\|_{a(K_{i,k-1}\backslash K_{i,k-2})}^2\geq (1+\frac{1}{2D^{1/2}HC_{ratio}^{1/2}})\|\eta\|_{a(\Omega\backslash K_{i,k-1})}^2.
\end{align*}

\end{proof}

%
%

\begin{lemma}\label{lemma:psi}
With the same assumptions as in Lemma~\ref{lemma:decay}, we can obtain
\begin{align*}
\|\sum_{i=1}^N(\psi_i^{(j)}-\psi_{i,ms}^{(j)})\|_a^2&\leq C(k+1)^2\sum_{i=1}^N \|\psi_i^{(j)}-\psi_{i,ms}^{(j)}\|_a^2.
\end{align*}

\end{lemma}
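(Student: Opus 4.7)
The plan is to exploit the finite-overlap property of the oversampling regions $\{K_{i,k+1}\}_{i=1}^N$. In two dimensions, every coarse element of $\mathcal{T}_H$ is contained in at most $C(k+1)^2$ of the sets $K_{i,k+1}$, since each such region has diameter $O((k+1)H)$. This combinatorial multiplicity is precisely what generates the $(k+1)^2$ factor in the statement.

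First, I would localize each summand using the cutoff $\chi_i := \chi_i^{k+1,k}$ defined in \eqref{cut1}-\eqref{cut2} (with $m=k$, $M=k+1$), writing
\[
w_i := \psi_i^{(j)} - \psi_{i,ms}^{(j)} = \chi_i w_i + (1-\chi_i)w_i.
\]
Because $\psi_{i,ms}^{(j)}$ vanishes outside $K_{i,k}$ and $\chi_i\equiv 1$ on $K_{i,k}$, the tail piece simplifies to $(1-\chi_i)\psi_i^{(j)}$, supported in $\Omega\setminus K_{i,k}$. The recursive decay argument developed in the proof of Lemma~\ref{lemma:decay} shows that $\|\psi_i^{(j)}\|_{a(\Omega\setminus K_{i,k})}$ is already controlled by $\|w_i\|_a$ (indeed it decays exponentially in $k$), so the tail contribution is harmless.

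Second, for the compactly supported piece $\chi_i w_i$, whose support lies in $K_{i,k+1}$, I would apply a pointwise Cauchy--Schwarz argument. The overlap multiplicity gives
\[
\Bigl\|\sum_{i=1}^N \chi_i w_i\Bigr\|_a^2 \leq C(k+1)^2\sum_{i=1}^N\|\chi_i w_i\|_a^2,
\]
and each $\|\chi_i w_i\|_a$ is bounded by $C\|w_i\|_a$ after expanding $\nabla(\chi_i w_i)=\chi_i\nabla w_i + w_i\nabla\chi_i$: the first term is handled via $|\chi_i|\leq 1$, and the second is absorbed using the Poincar\'e inequality on each coarse block. The Poincar\'e step is legitimate because $\pi(w_i)=0$: both $\psi_i^{(j)}$ and $\psi_{i,ms}^{(j)}$ have the same prescribed means $\delta_{li}\delta_{nj}$ on every region $K_l^n\subset K_{i,k}$ and vanish on the projections outside, so $w_i\in\tilde V$ and the block-wise Poincar\'e inequality applies.

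The main obstacle I anticipate is controlling the tail sum $\sum_i(1-\chi_i)w_i$ without sacrificing the $(k+1)^2$ rate, since the summands are no longer compactly supported and the overlap count does not apply directly. The remedy is to notice that each tail lies in $\Omega\setminus K_{i,k}$ where $w_i$ is already small by Lemma~\ref{lemma:decay}, and to regroup the tails into annular contributions indexed by the distance from $K_i$; summing these geometrically produces a constant absorbable into $C(k+1)^2$. Combining the localized and tail pieces by the triangle inequality then yields the desired estimate.
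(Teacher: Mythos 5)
Your proposal shares the paper's two standard ingredients (a cutoff $\chi_i^{k+1,k}$ and the $O((k+1)^2)$ finite-overlap count of the regions $K_{i,k+1}$), but it diverges from the paper at the decisive structural step, and the part where it diverges is exactly the part you leave unexecuted. The paper never decomposes the individual summands $w_i=\psi_i^{(j)}-\psi_{i,ms}^{(j)}$. Instead it writes $\|w\|_a^2=\sum_i a(w_i,w)$ with $w=\sum_i w_i$, and then replaces $w$ by $\chi_i^{k+1,k}w+z_i$ inside each pairing, where $z_i$ is the corrector supplied by Lemma~\ref{lemma:saddle} with $\pi(z_i)=\pi((1-\chi_i^{k+1,k})w)$ and support in $K_{i,k+1}\setminus K_{i,k}$. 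The point is that $(1-\chi_i^{k+1,k})w-z_i$ has vanishing $\pi$-projection and is supported away from $K_{i,k}$, so by the Lagrange-multiplier identity \eqref{eq:error} it is $a$-orthogonal to $w_i$: the far-field part of $w$ is eliminated \emph{exactly}, and only functions supported in $K_{i,k+1}$ remain, to which the overlap count applies directly. There is no tail sum to estimate.

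By cutting off each $w_i$ separately you create the tail $\sum_i(1-\chi_i)\psi_i^{(j)}$ of globally supported functions, and this is where your argument has a genuine gap. Your one-sentence remedy (``regroup into annular contributions and sum geometrically'') is pointing in a workable direction, but it is the hardest step of your route and the way you carry it out determines whether you recover the stated rate: a plain triangle inequality gives a factor $N$, and a naive Schur test on the double sum $\sum_{i,i'}a(g_i,g_{i'})$ gives $O(k^3)$ rather than $O(k^2)$, because the annulus at coarse distance $m$ contains $O(m)$ elements. To salvage $(k+1)^2$ you need to localize to coarse elements first, write $\|\sum_i g_i\|_{a(K)}\le\sum_i\|g_i\|_{a(K)}$ with $\|g_i\|_{a(K)}\lesssim\theta^{d_i(K)-k}\|w_i\|_a$ from the recursive decay \eqref{eq:recursive}, apply Cauchy--Schwarz in $i$ with the geometric weights, and only then sum over $K$; each of the two resulting geometric sums contributes a factor $O(k)$. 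So your approach can be completed, but as written the proof is not there, and the completion is appreciably more delicate than the paper's device of testing against $(1-\chi_i^{k+1,k})w-z_i$, which you should compare against: the corrector $z_i$ is not an optional convenience in the paper's proof but the mechanism that removes the tail before any summation is needed.
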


\begin{proof}
Let $w=\sum_{i=1}^N (\psi_i^{(j)}-\psi_{i,ms}^{(j)})$. By the constructions in \eqref{multiscale1}-\eqref{multiscale2} and \eqref{global-multiscale} and Lemma~\ref{lemma:saddle}, there is $z_i\in H^1_0(\Omega)$ such that
\begin{align*}
\pi(z_i)=\pi((1-\chi_{i}^{k+1,k})w), \quad \mbox{supp}(z_i)\subset K_{i,k+1}\backslash K_{i,k},\quad \|z_i\|_a\leq D\|\kappa^{1/2}\pi((1-\chi_{i}^{k+1,k}w))\|_0.
\end{align*}
It then follows from \eqref{eq:eq1} and \eqref{eq:eq2} that
\begin{align}
a(\psi_i^{(j)}-\psi_{i,ms}^{(j)},v)+
\sum_{K_i^l\subset K_{i,k}}(\mu_i^{(l)}-\mu_{i,ms}^{(l)})\int_{K_i^l}v\;dx=0\quad \forall v\in H^1_0(K_{i,k}).\label{eq:error}
\end{align}
Putting $v=((1-\chi_i^{k+1,k})w)-z_i$ in \eqref{eq:error}, we can obtain
\begin{align*}
a(\psi_i^{(j)}-\psi_{i,ms}^{(j)},((1-\chi_i^{k+1,k})w)-z_i)=0.
\end{align*}
Thus
\begin{align}
\|\sum_{i=1}^N(\psi_i^{(j)}-\psi_{i,ms}^{(j)})\|_a^2
=a(w,w)=\sum_{i=1}^Na(\psi_i^{(j)}-\psi_{i,ms}^{(j)},w)
=\sum_{i=1}^Na(\psi_i^{(j)}-\psi_{i,ms}^{(j)},\chi_i^{k+1,k}w+z_i).\label{eq:psi}
\end{align}
For each $i=1,2,\ldots,N$, we have
\begin{align*}
\|\chi_i^{k+1,k}w\|_a^2\leq C (\|w\|_{a(K_{i,k+1})}^2+\|\kappa^{1/2}w\|_{0,K_{i,k+1}}^2)\leq (1+C_{ratio}H^2) \|w\|_{a(K_{i,k+1})}^2.
\end{align*}
In addition, since $\pi_{mn}(w)=0$ for all $K_m^n$ with $m\neq i,\forall n=1,\cdots,l_m$, we can get
\begin{align*}
\|z_i\|_a^2&\leq D^2 \|\kappa^{1/2}\pi((1-\chi_i^{k+1,k})w)\|_0^2\leq D^2\|\kappa^{1/2}\pi(\chi_i^{k+1,k}w)\|_{0,K_{i,k+1}}^2\leq D^2\|\kappa^{1/2}w\|_{0,K_{i,k+1}}^2\\
&\leq D^2C_{ratio}H^2\|w\|_{a(K_{i,k+1})}^2.
\end{align*}
which yields the desired estimate by combining with \eqref{eq:psi}.


\end{proof}

The convergence of the multiscale solution can be stated in the next theorem.
\begin{theorem}
Let $u$ be the solution of \eqref{eq:weak} and $u_h$ be the multiscale solution, then we have
\begin{align}
\|u-u_{ms}\|_a\leq C HC_{ratio}^{1/2}\|\kappa^{-1/2}f\|_0
+C (1+k)E^{1/2}C_{ratio}^{1/2}\|\kappa^{1/2}u_{glo}\|_0.\label{eq:convergence1}
\end{align}
Moreover, if $k=O(log(\frac{max\{\kappa\}}{H}))$, then we have
\begin{align}
\|u-u_{ms}\|_a&\leq C H C_{ratio}^{1/2}\|\kappa^{-1/2}f\|_0,\label{eq:convergence2}\\
\|u-u_{ms}\|_0&\leq C H^2C_{ratio}^{1/2}\kappa_{min}^{-1/2} \|\kappa^{-1/2}f\|_0.\nonumber
\end{align}

\end{theorem}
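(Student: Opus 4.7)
The plan is to invoke the Galerkin quasi-optimality of $u_{ms}$ in $V_{ms}$ and compare with a natural test element built from $u_{glo}$. Writing $u_{glo}=\sum_{i,j}\bar u_i^{(j)}\psi_i^{(j)}$ with coefficients $\bar u_i^{(j)}=\pi_{ij}(u_{glo})$, I would take the surrogate $w=\sum_{i,j}\bar u_i^{(j)}\psi_{i,ms}^{(j)}\in V_{ms}$ and split $\|u-u_{ms}\|_a \leq \|u-w\|_a \leq \|u-u_{glo}\|_a + \|u_{glo}-w\|_a$. The first term is handled immediately by Lemma~\ref{lemma:conglo} and contributes the $CHC_{ratio}^{1/2}\|\kappa^{-1/2}f\|_0$ piece in \eqref{eq:convergence1}.

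The heart of the argument is the bound on $\|u_{glo}-w\|_a = \|\sum_{i,j}\bar u_i^{(j)}(\psi_i^{(j)}-\psi_{i,ms}^{(j)})\|_a$. I would apply Lemma~\ref{lemma:psi} (with the weights $\bar u_i^{(j)}$ absorbed into the basis differences via rescaling) to produce the factor $(1+k)$ coming from the bounded overlap of the oversampling patches $K_{i,k+1}$, while Lemma~\ref{lemma:decay} supplies the decay factor $E^{1/2}$ for each individual basis difference. The resulting $\ell^2$-type sum of the coefficients is identified with $\|\kappa^{1/2}\pi(u_{glo})\|_0$ through $\pi_{ij}(u_{glo})=\bar u_i^{(j)}$, and is then compared with $\|\kappa^{1/2}u_{glo}\|_0$ at the cost of a local $C_{ratio}^{1/2}$ loss coming from the piecewise-constant nature of $\pi$ together with a Poincar\'e estimate inside each region. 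Assembling these contributions yields the second term in \eqref{eq:convergence1}.

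Passing from \eqref{eq:convergence1} to \eqref{eq:convergence2} amounts to choosing $k$ so that the exponentially decaying factor $(1+\tfrac{1}{2HD^{1/2}C_{ratio}^{1/2}})^{1-k}$ inside $E$ overwhelms both the algebraic $(1+k)^2$ growth and the $\|\kappa^{1/2}u_{glo}\|_0$ factor (which is itself bounded via the PDE by $\kappa_{min}$-dependent data norms of $f$). A choice $k = O(\log(\max\{\kappa\}/H))$ makes $(1+k)E^{1/2}C_{ratio}^{1/2}\|\kappa^{1/2}u_{glo}\|_0 \lesssim H C_{ratio}^{1/2}\|\kappa^{-1/2}f\|_0$, so the second term in \eqref{eq:convergence1} is absorbed by the first. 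The $L^2$ estimate then follows from a standard Aubin--Nitsche duality: introduce the dual problem with right-hand side $u-u_{ms}$, apply the energy error estimate just established to both the primal and the dual problems, and convert the weighted norm on the dual side using $\|\kappa^{-1/2}\cdot\|_0 \leq \kappa_{min}^{-1/2}\|\cdot\|_0$ to produce the extra $H$ and the $\kappa_{min}^{-1/2}$ factor.

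The main obstacle I anticipate is the bookkeeping in the second paragraph: one must carefully justify that the finite-overlap argument underlying Lemma~\ref{lemma:psi} still applies when the basis differences are weighted by the coefficients $\bar u_i^{(j)}$, and then show that the resulting weighted sum is controlled by $\|\kappa^{1/2}u_{glo}\|_0$ with only a $C_{ratio}^{1/2}$ loss rather than a global contrast penalty. Tracking the contrast-dependent Poincar\'e constants together with the stability constant $D$ from Lemma~\ref{lemma:saddle} through both Lemmas~\ref{lemma:decay} and~\ref{lemma:psi}, and ensuring that these constants combine into exactly the $(1+k)E^{1/2}C_{ratio}^{1/2}$ prefactor stated in \eqref{eq:convergence1}, is the most delicate part of the proof.
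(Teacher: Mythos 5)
Your proposal follows essentially the same route as the paper: Galerkin quasi-optimality against the surrogate $\sum_{i,j} c_i^{(j)}\psi_{i,ms}^{(j)}$, the triangle-inequality split handled by Lemma~\ref{lemma:conglo} for the first piece and Lemmas~\ref{lemma:psi} and~\ref{lemma:decay} for the second (with the coefficient sum identified with $\|\kappa^{1/2}u_{glo}\|_0$ up to a $C_{ratio}$ factor), followed by bounding $\|\kappa^{1/2}u_{glo}\|_0$ via the PDE, choosing $k$ logarithmically to absorb the second term, and an Aubin--Nitsche duality for the $L^2$ bound. This matches the paper's argument in all essential respects.
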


\begin{proof}

We write $u_{glo}=\sum_{i=1}^N\sum_{j=1}^{l_i}c_i^{(j)}\psi_i^{(j)}$. Then we define $v=\sum_{i=1}^N\sum_{j=1}^{l_i}c_i^{(j)}\psi_{i,ms}^{(j)}$. It then follows from the Galerkin orthogonality that
\begin{align}
\|u-u_{ms}\|_a\leq \|u-v\|_a\leq \|u-u_{glo}\|_a
+\|\sum_{i=1}^N\sum_{j=1}^{l_i}c_i^{(j)}(\psi_i^{(j)}-\psi_{i,ms}^{(j)})\|_a.\label{eq:ms}
\end{align}
Lemma~\ref{lemma:psi} yields
\begin{align*}
\|\sum_{i=1}^N\sum_{j=1}^{l_i}c_i^{(j)}(\psi_i^{(j)}-\psi_{i,ms}^{(j)})\|_a^2&\leq C (1+k)^2\sum_{i=1}^N\|\sum_{j=1}^{l_i}c_i^{(j)}(\psi_i^{(j)}-\psi_{i,ms}^{(j)})\|_a^2\\
&\leq C (k+1)^2C_{ratio}\sum_{i=1}^N \|\kappa^{1/2}\sum_{j=1}^{l_i}c_i^{(j)}\delta_{ij}\|_0^2
\leq C (k+1)^2C_{ratio}\|\kappa^{1/2}u_{glo}\|_0^2.
\end{align*}
The above equation together with Lemma~\ref{lemma:conglo} and \eqref{eq:ms} implies
\begin{align*}
\|u-u_{ms}\|_a\leq C
\Big(HC_{ratio}^{1/2}\|\kappa^{-1/2}f\|_0+(1+k)E^{1/2}C_{ratio}^{1/2}\|\kappa^{1/2}u_{glo}\|_0\Big).
\end{align*}
This yields \eqref{eq:convergence1}.

The Poincar\'{e} inequality yields
\begin{align*}
\|\kappa^{1/2}u_{glo}\|_0^2\leq \kappa_{min}^{-1}\kappa_{max}\|u_{glo}\|_a^2.
\end{align*}
An application of \eqref{eq:global} and the Cauchy-Schwarz inequality gives
\begin{align*}
\|u_{glo}\|_a^2=\int_{\Omega} f u_{glo}\leq C \|\kappa^{-1/2}f\|_0\|\kappa^{1/2}u_{glo}\|_0.
\end{align*}
Therefore
\begin{align*}
\|\kappa^{1/2}u_{glo}\|_0\leq \kappa_{min}^{-1}\kappa_{max} \|\kappa^{-1/2}f\|_0.
\end{align*}

Then proceeding analogously to \cite{ChungEfendievleung18}
and employing the fact that $C_{ratio}$ is relatively small, we can conclude that if $k=O(log(\frac{max\{\kappa\}}{H}))$,
then we can obtain \eqref{eq:convergence2}.

Next, we consider the estimate for $\|u-u_{glo}\|_0$. Consider the dual problem
\begin{align*}
a(z,v)=(u-u_{ms},v) \quad \forall v\in H^1_0(\Omega).
\end{align*}
Then, the Cauchy-Schwarz inequality and \eqref{eq:convergence2} yield
\begin{align*}
\|u-u_{ms}\|_0^2&=a(z,u-u_{ms})=a(z-z_{ms},u-u_{ms})\leq  \|z-z_{ms}\|_a\|u-u_{ms}\|_a\\
&\leq C HC_{ratio}^{1/2}\kappa_{min}^{-1/2}\|u-u_{ms}\|_0\|u-u_{ms}\|_a.
\end{align*}
Thus
\begin{align*}
\|u-u_{ms}\|_0\leq C HC_{ratio}^{1/2}\kappa_{min}^{-1/2}\|u-u_{ms}\|_a.
\end{align*}


\end{proof}

\section{Numerical experiments}\label{sec:numerical}

%

This section presents numerical experiments to verify the capability of the proposed method to the problem with high contrast medium. To compare the results, we exploit the relative $L^2$ error between coarse cell average of the fine-scale solution $\bar{u}_f$ and the upscaled coarse grid solution $\bar{u}$
\begin{align*}
e_{L^2}=\|\bar{u}_f-\bar{u}\|_{L^2},\quad \|\bar{u}_f-\bar{u}\|_{L^2}=\frac{\sum_{K}\int_{K}(\bar{u}_f-\bar{u}^K)^2\;dx}{\sum_K \int(\bar{u}_f)^2\;dx},\quad \bar{u}_f^K=\frac{1}{|K|}\int_K u_f\;dx.
\end{align*}

\begin{example}\label{ex1}
\end{example}
In this example, we take $\Omega=(0,1)^2$, $u=0$ on $\partial \Omega$ and we set $f=1$. The medium $\kappa$ is shown in Figure~\ref{a} and we assume that the fine mesh size $h$ to be $\sqrt{2}/400$, That is, the medium $\kappa$ has a $400\times 400\times 2$ resolution. We consider the contrast of the medium is $10^4$ where the value of $\kappa$ is large in the yellow region. For the NLMC method, we consider two continua. 

\begin{figure}[H]
\centering
\includegraphics[width=7cm]{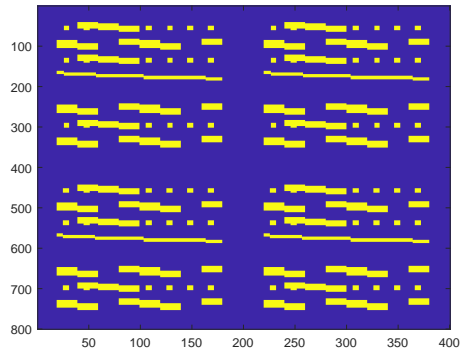}
\caption{The medium $\kappa$ for Example~\ref{ex1}.}
\label{a}
\end{figure}
The fine scale and upscaled solutions for coarse mesh $20\times 20$ with $4$ oversampling layers can be found in Figure~\ref{fine1}-Figure~\ref{coarse1}. In Figure~\ref{fine1}, we display the downscale and fine scale solution and in Figure~\ref{coarse1} we show the upscaled coarse solution and the average value of the fine scale solution. In addition, the numerical results for $40\times 40$ coarse mesh with 5 oversampling layers are reported in Figure~\ref{solution40}-Figure~\ref{average40}. From which we observe very good agreement between
the fine-scale solution and the computed upscaled solution.
\begin{figure}[H]
\centering
\scalebox{0.3}{
\includegraphics[width=20cm]{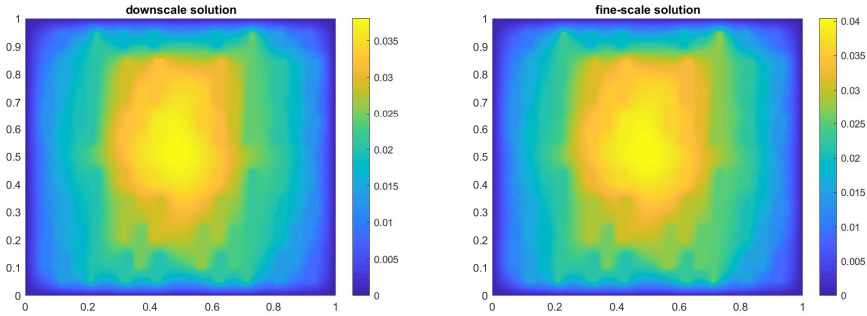}
}
\scalebox{0.3}{
\includegraphics[width=20cm]{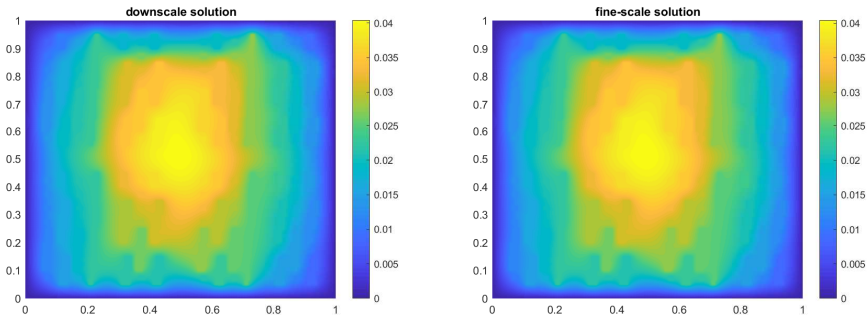}
}
\caption{Downscale solution and fine-scale solution for Example~\ref{ex1}.}
\label{fine1}
\end{figure}

\begin{figure}[H]
\centering
\scalebox{0.3}{
\includegraphics[width=20cm]{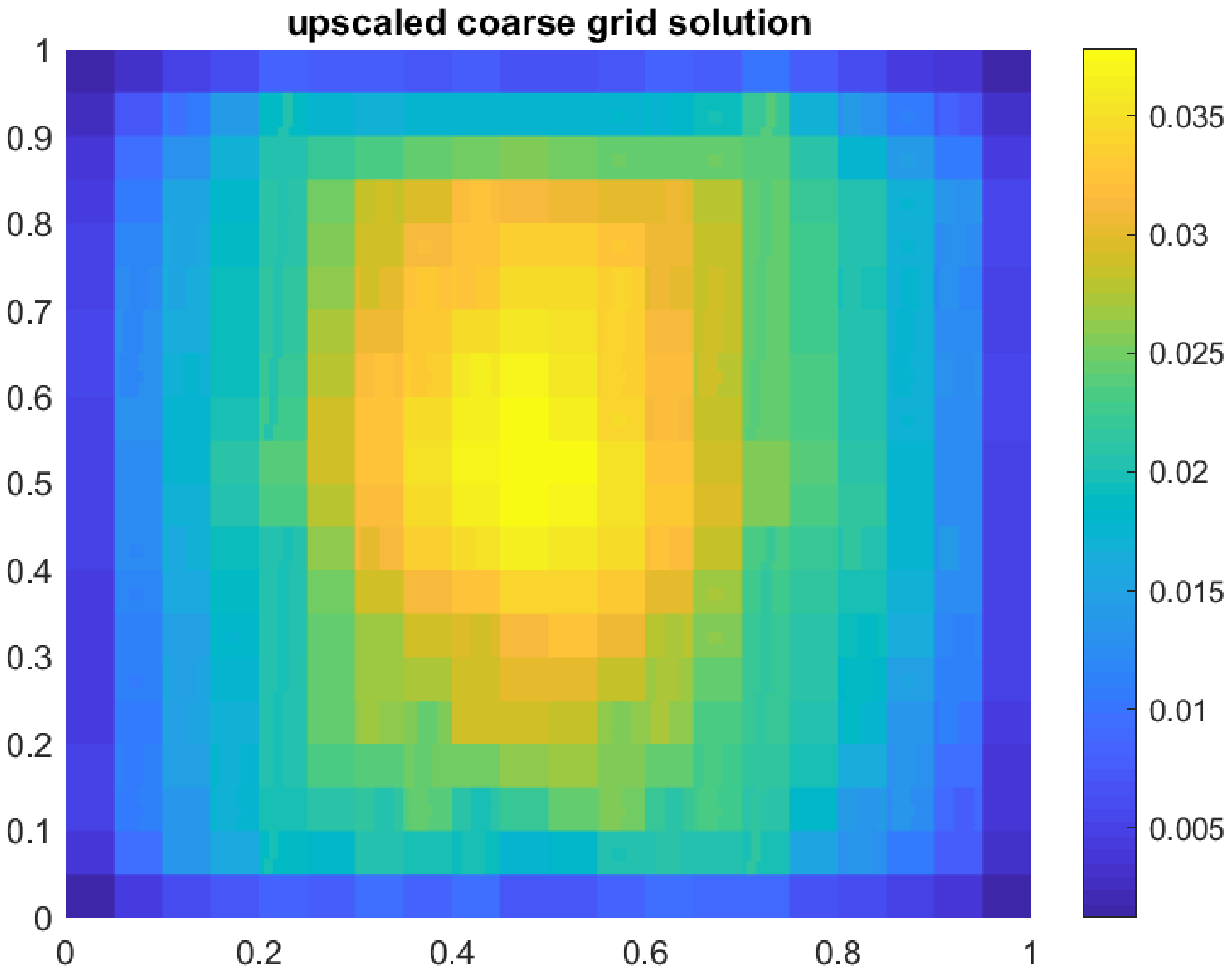}
}
\scalebox{0.3}{
\includegraphics[width=20cm]{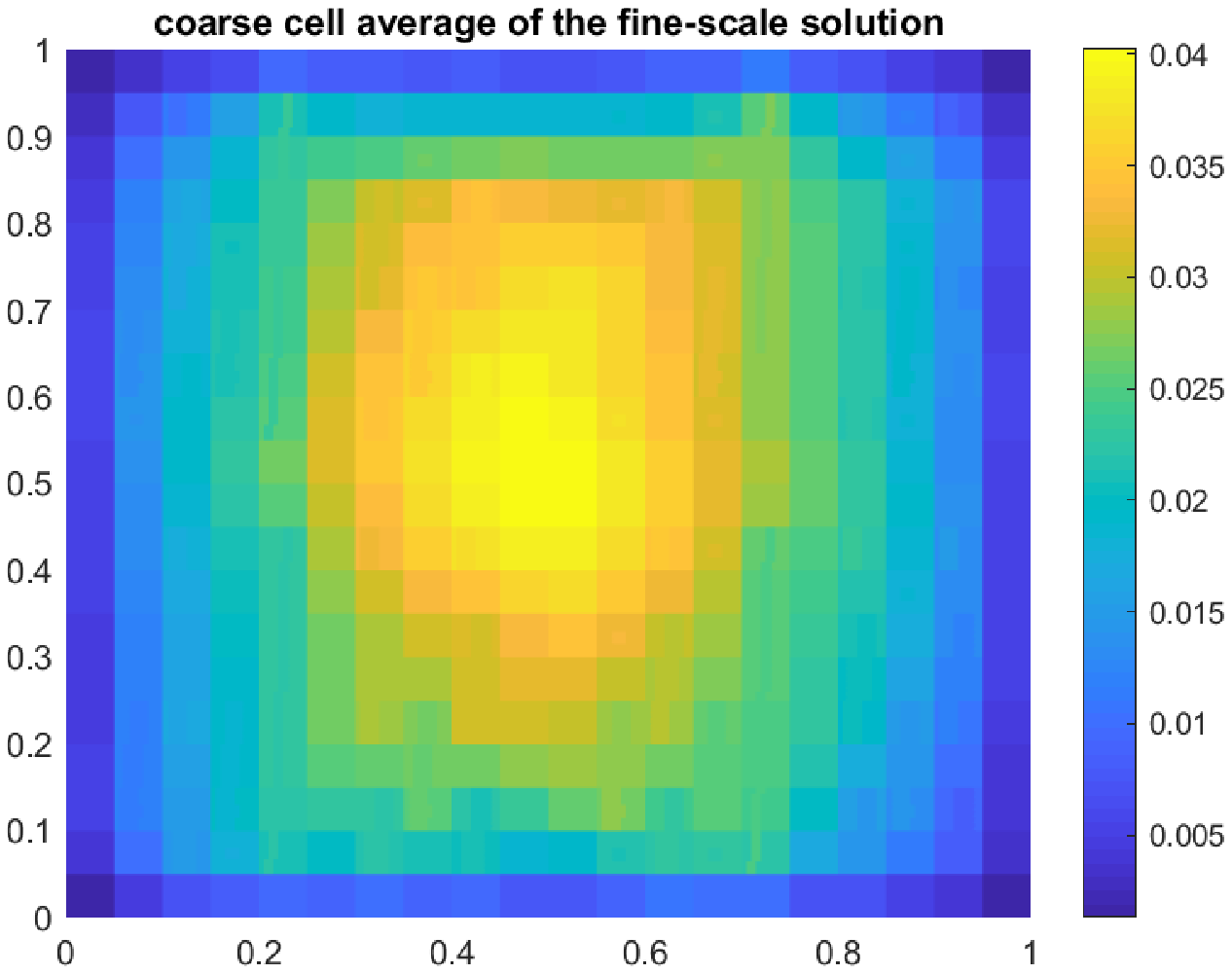}
}
\caption{Coarse scale solution and coarse cell average of fine-scale solution for Example~\ref{ex1}.}
\label{coarse1}
\end{figure}

\begin{figure}[H]
\centering
\scalebox{0.3}{
\includegraphics[width=20cm]{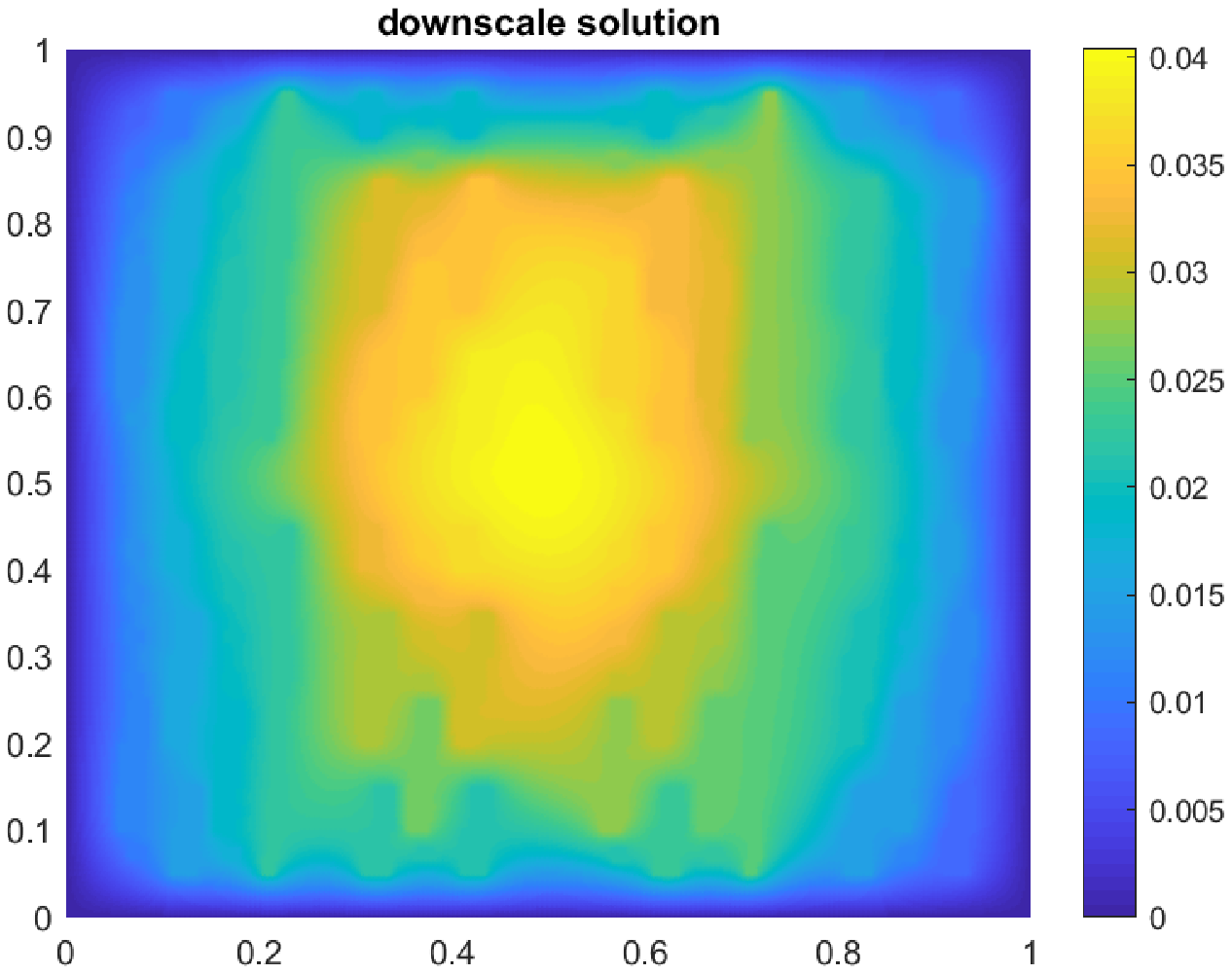}
}
\scalebox{0.3}{
\includegraphics[width=20cm]{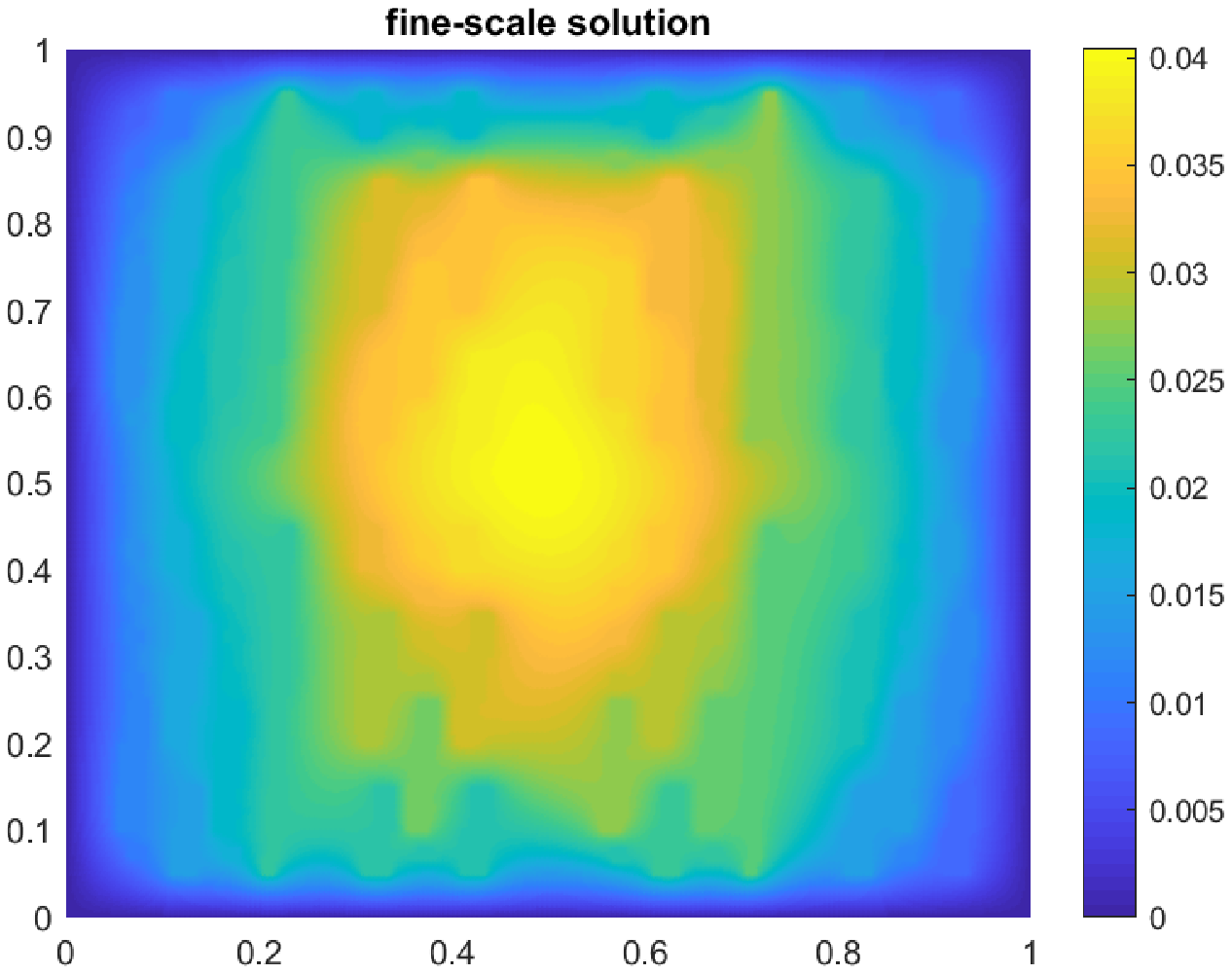}
}
\caption{Downscale solution and fine-scale solution for Example~\ref{ex1}.}
\label{solution40}
\end{figure}

\begin{figure}[H]
\centering
\scalebox{0.3}{
\includegraphics[width=20cm]{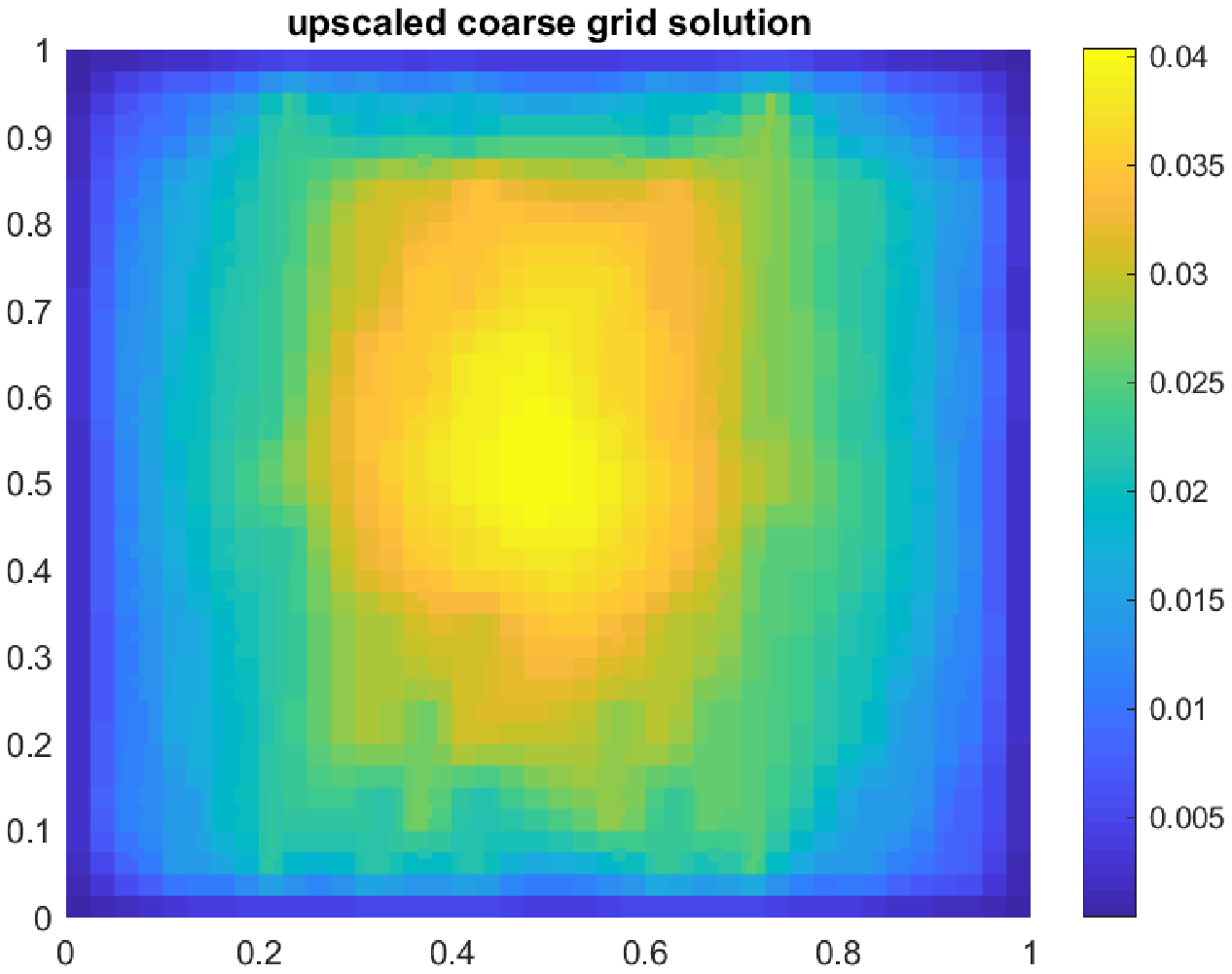}
}
\scalebox{0.3}{
\includegraphics[width=20cm]{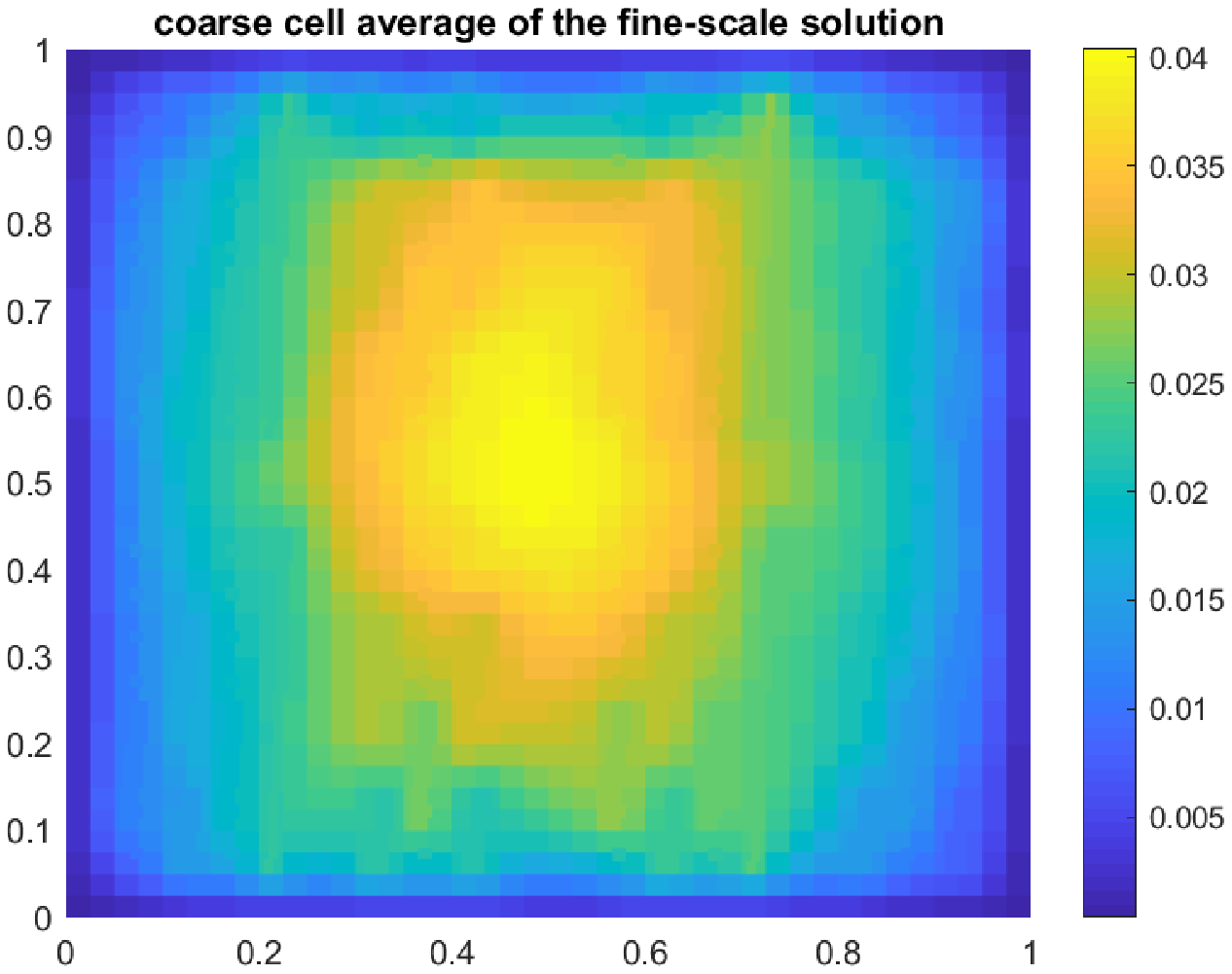}
}
\caption{Coarse scale solution and coarse cell average of fine-scale solution for Example~\ref{ex1}.}
\label{average40}
\end{figure}

In Table~\ref{ex1-con}, we present the relative $L^2$ error with varying coarse grid size. With proper choices of oversampling layers, we can see that the error converges. The relative $L^2$ error for coarse grids $20\times 20$ and $40\times 40$, and for different number of oversampling layers are reported in Table~\ref{mesh-layer}. From which we can see that for a fixed contrast value, the error decays as the oversampling size increases. In addition, as the number of coarse grid increases, more oversampling layers are required in order to achieve the desired error. Furthermore, for a fixed oversampling size, the performance of the scheme will deteriorate as the medium contrast increases, which can be illustrated by Table~\ref{layer-ratio}.
\begin{table}[H]
\centering
\begin{tabular}{l|llll}
\hline
$H$& oversampling coarse layers & $e_{L^2}$ \\
\hline
$\frac{1}{10}$& 3  & 0.1678\\
$\frac{1}{20}$& 4  & 0.0808\\
$\frac{1}{40}$& 5 & 0.0453\\
\hline
\end{tabular}
\caption{Relative $L^2$ error for Example~\ref{ex1} with varying coarse grid size.}
\label{ex1-con}
\end{table}

\begin{table}[H]
\centering
\begin{tabular}{l|ll}
\hline
Layer& coarse mesh \;20$\times$ 20 &coarse mesh \;40$\times$40\\
\hline
1 & 0.9690    & 0.9876 \\
3 & 0.4816    & 0.9136 \\
4 & 0.0808    & 0.4772 \\
5 & 0.0054    & 0.0453 \\
6 & 2.759e-4  & 0.0012  \\
\hline
\end{tabular}
\caption{Relative $L^2$ error with respect to different number of oversampling layers
for Example~\ref{ex1}.}
\label{mesh-layer}
\end{table}

\begin{table}[H]
\centering
\begin{tabular}{l|llll}
\hline
Layer $\backslash$ Contrast& $10^3$ & $10^4$ & $10^5$ & $10^6$\\
\hline
3 & 0.1575    & 0.4816 &0.6319 &0.6526\\
4 & 0.0103    & 0.0808 &0.3796 &0.6081\\
5 & 6.0346e-4 & 0.0054 &0.0496 &0.2943\\
\hline
\end{tabular}
\caption{Comparison of various number of oversampling layers and different contrast values
for Example~\ref{ex1}.}
\label{layer-ratio}
\end{table}

\begin{example}\label{ex3}

\end{example}

In this example, we again take $\Omega=(0,1)^2$ and the profile of $\kappa$ is shown in Figure~\ref{ex3-a}, where $\kappa$ is taken to be some random numbers between $(1,10)$ for the blue region and $\kappa$ is $10^3$ or $10^4$ in the yellow region. For the NLMC method, we consider three continua,
namely, $\{ 1\leq \kappa \leq 10\}$, $\{ \kappa \approx 10^3\}$ and $\{ \kappa \approx 10^4\}$.
In addition, $f$ is taken to be
\begin{align*}f(x,y)=
  \begin{cases}
    1 \quad \forall\; 0\leq x\leq 0.1, 0\leq y\leq 0.1,\\
    0 \quad\mbox{otherwise}\\
  \end{cases}
\end{align*}

\begin{figure}[H]
\centering
\includegraphics[width=7cm]{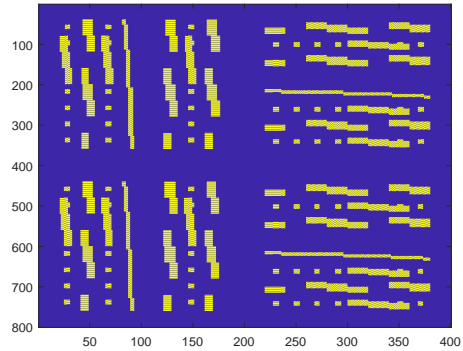}
\caption{The medium $\kappa$ for Example~\ref{ex3}.}
\label{ex3-a}
\end{figure}

The fine scale and upscaled solutions for coarse mesh $20\times 20$ with $4$ oversampling layers can be found in Figure~\ref{ex3-solution-fine20}-Figure~\ref{ex3-solution-coarse20}. In Figure~\ref{ex3-solution-fine20}, we display the downscale and fine scale solution and in Figure~\ref{ex3-solution-coarse20} we show the upscaled coarse solution and the average value of the fine scale solution. The numerical results for $40\times 40$ coarse mesh with 5 oversampling layers are reported in Figure~\ref{ex3-solution-fine40}-Figure~\ref{ex3-solution-coarse40}. We can observe that the fine-grid solution and the upscaled coarse grid solution match well.
\begin{figure}[H]
\centering
\scalebox{0.3}{
\includegraphics[width=20cm]{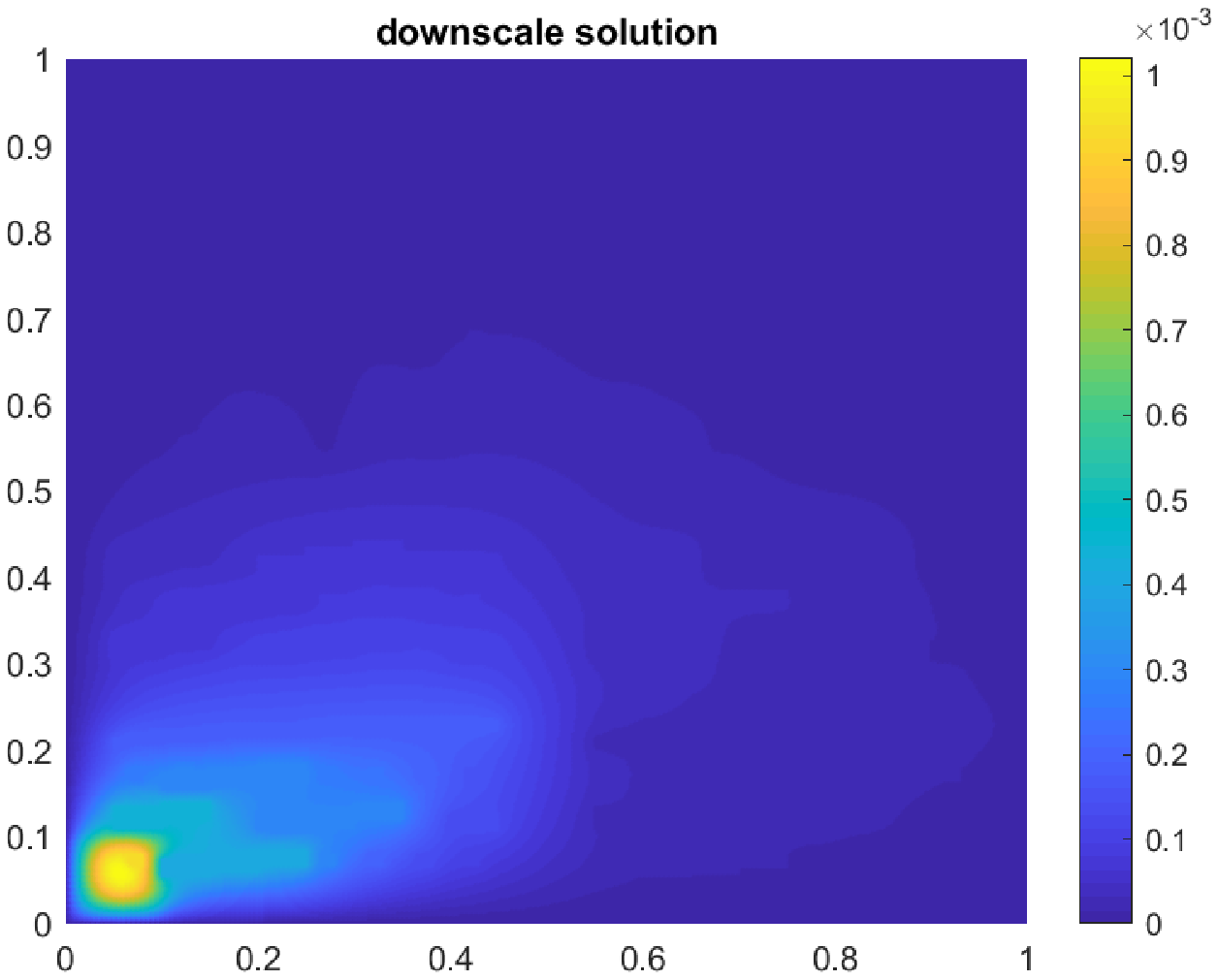}
}
\scalebox{0.3}{
\includegraphics[width=20cm]{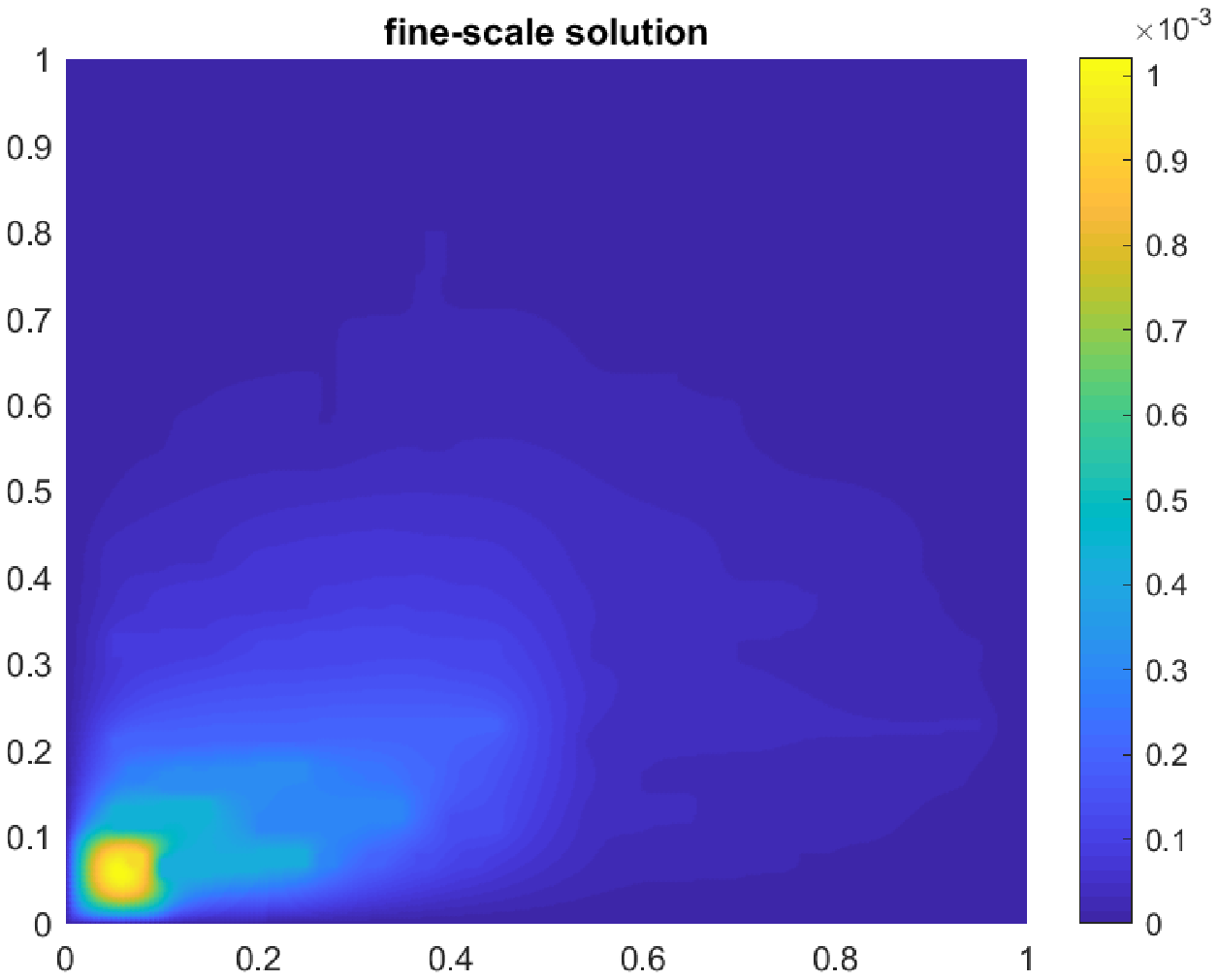}
}
\caption{Downscale solution and fine-scale solution.}
\label{ex3-solution-fine20}
\end{figure}

\begin{figure}[H]
\centering
\scalebox{0.3}{
\includegraphics[width=20cm]{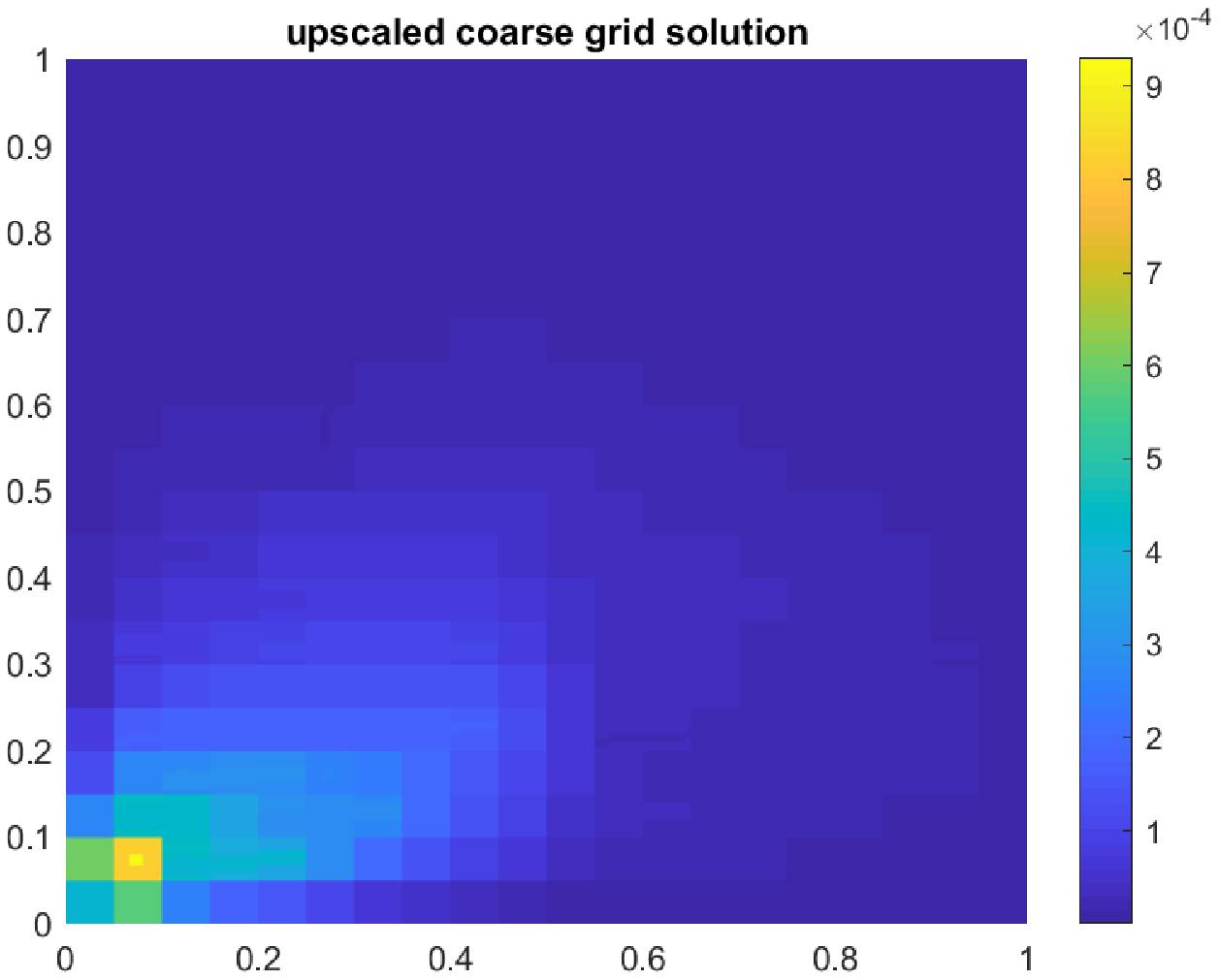}
}
\scalebox{0.3}{
\includegraphics[width=20cm]{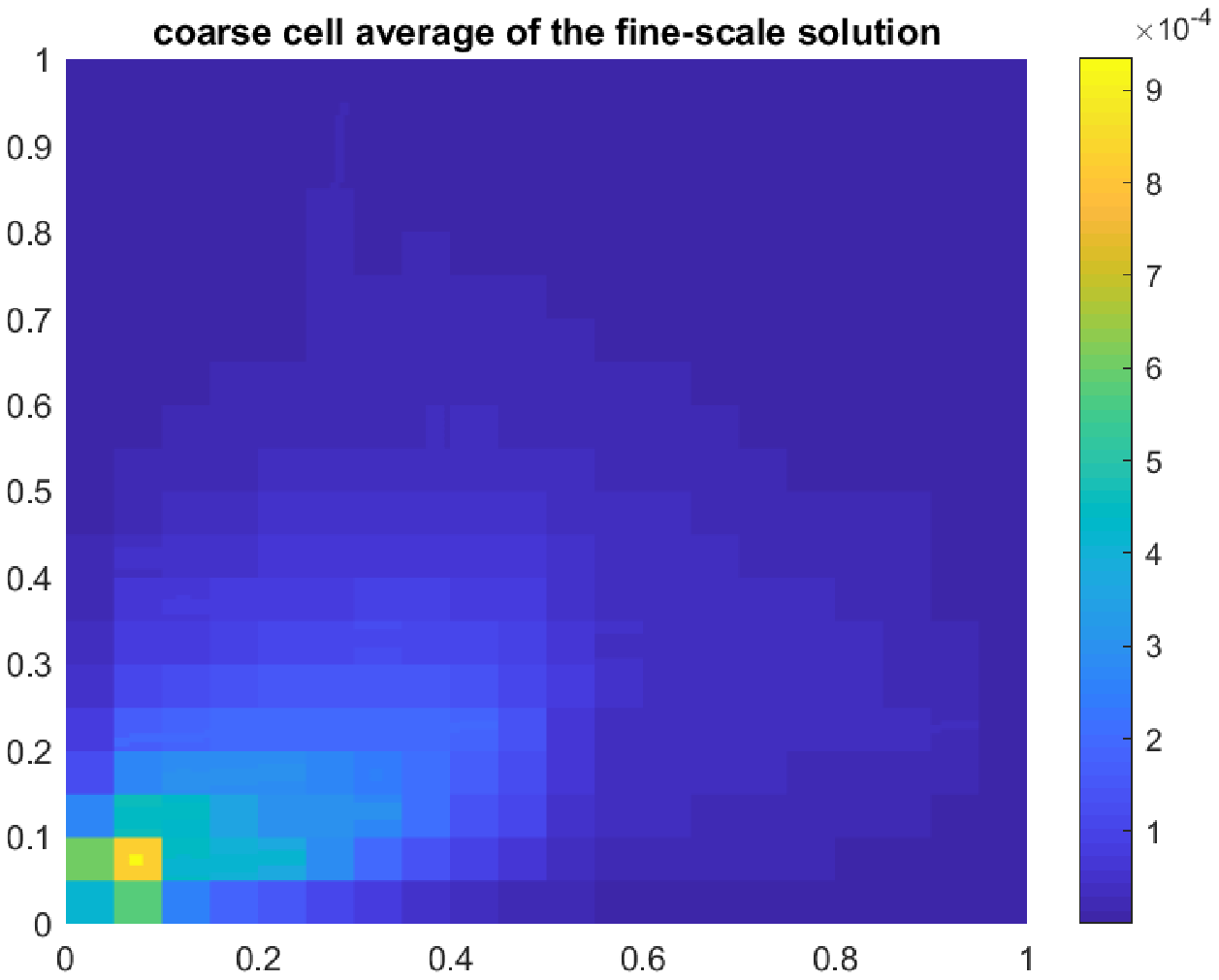}
}
\caption{coarse scale solution and coarse cell average of fine-scale solution.}
\label{ex3-solution-coarse20}
\end{figure}

\begin{figure}[H]
\centering
\scalebox{0.3}{
\includegraphics[width=20cm]{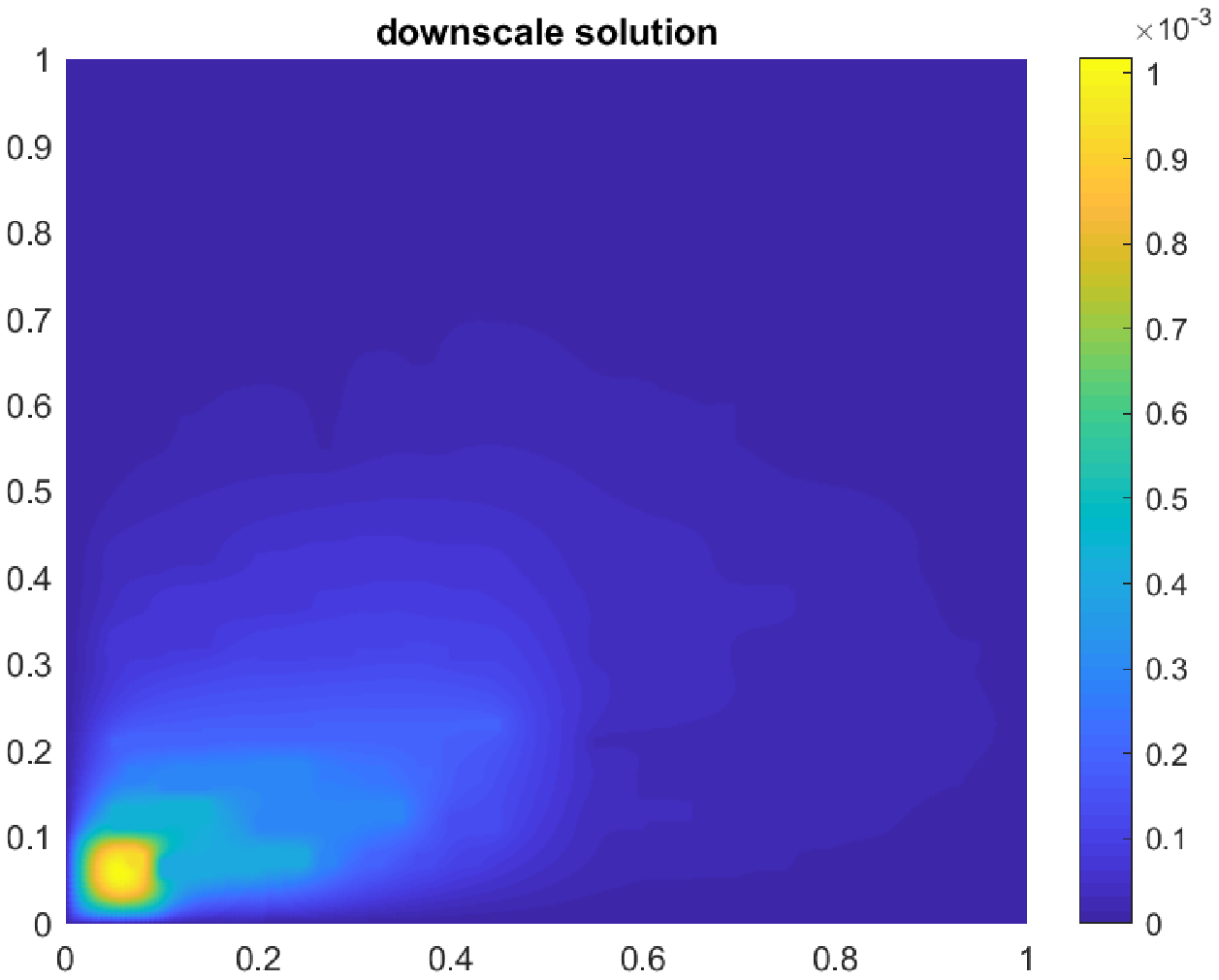}
}
\scalebox{0.3}{
\includegraphics[width=20cm]{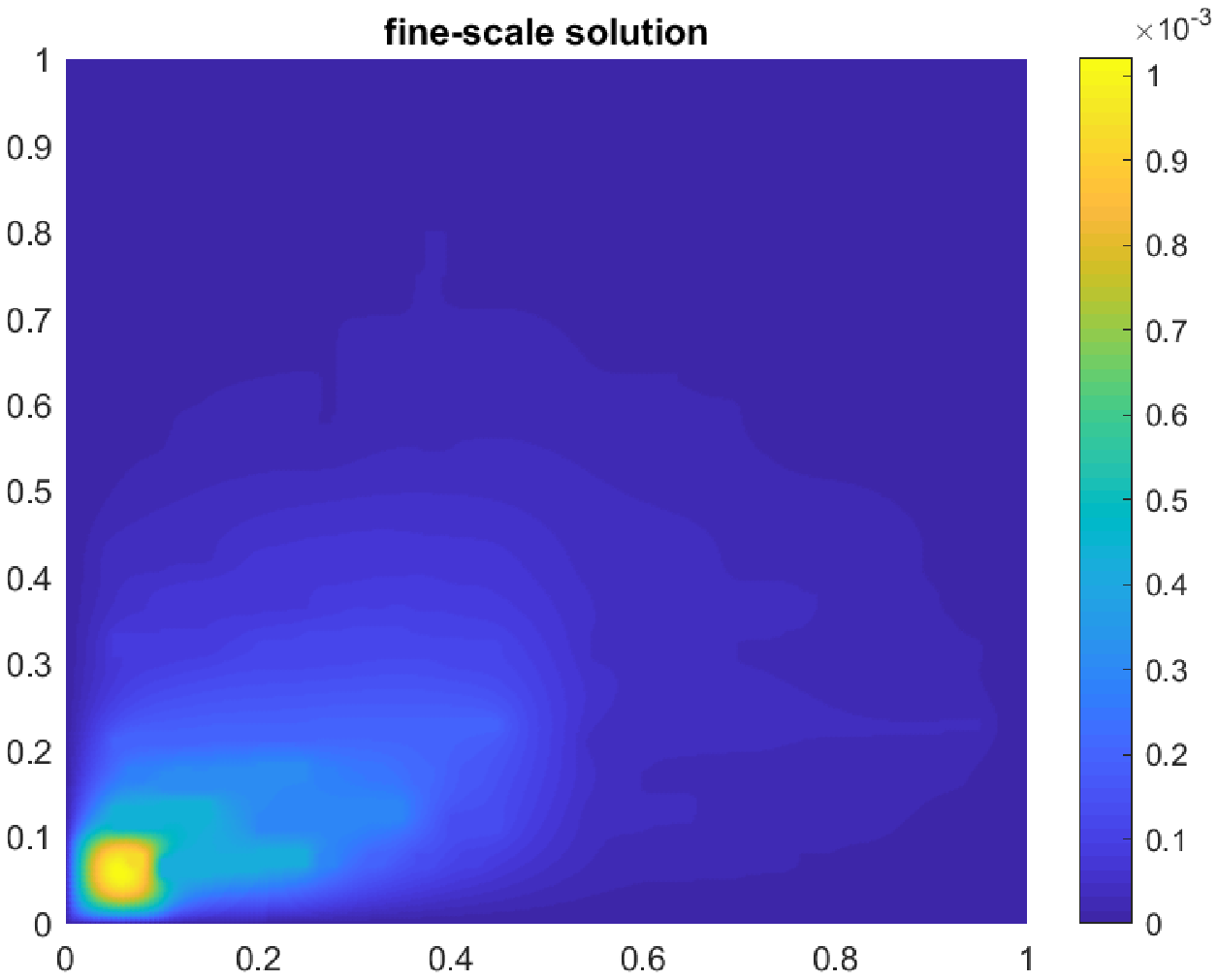}
}
\caption{Downscale solution and fine-scale solution.}
\label{ex3-solution-fine40}
\end{figure}

\begin{figure}[H]
\centering
\scalebox{0.3}{
\includegraphics[width=20cm]{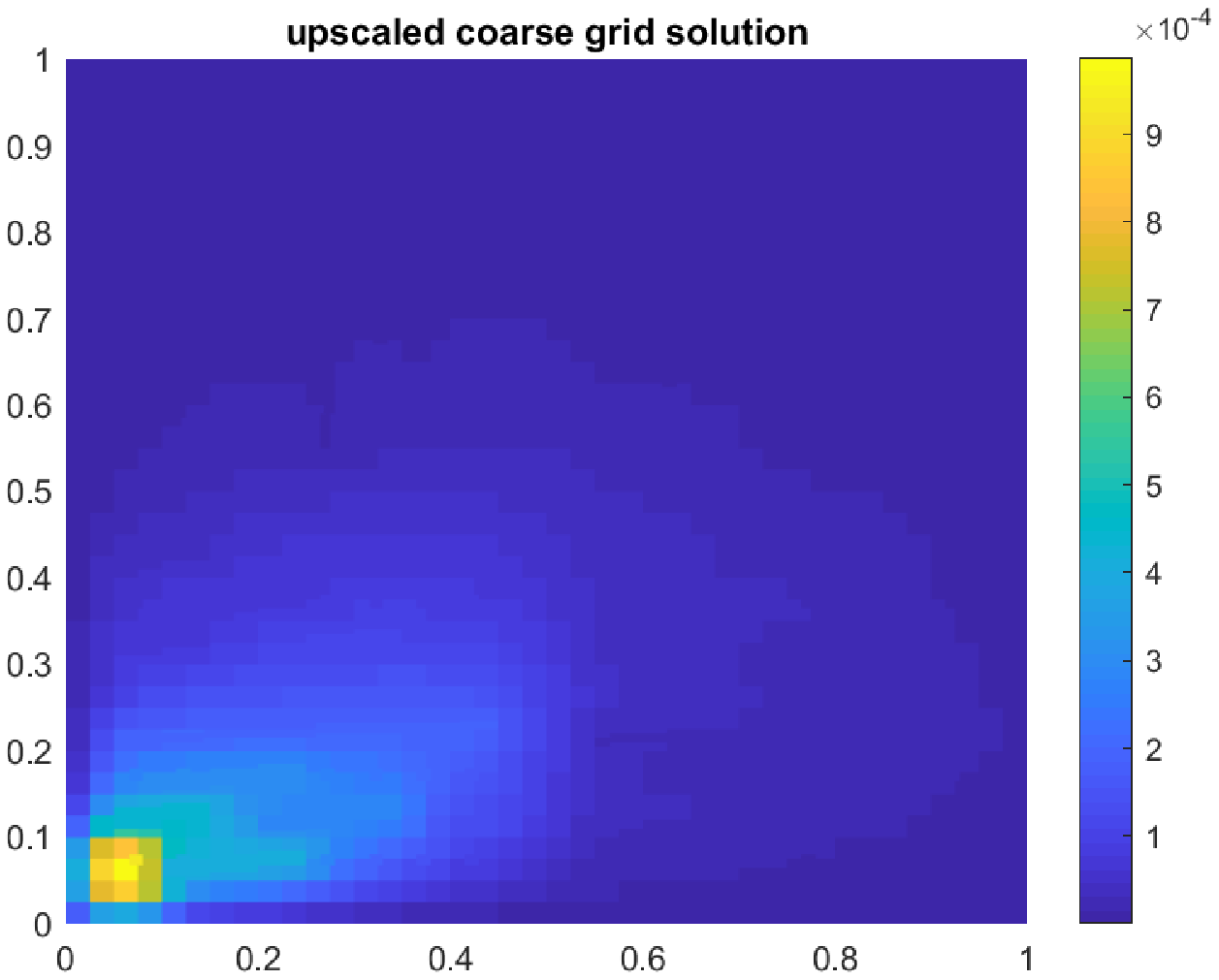}
}
\scalebox{0.3}{
\includegraphics[width=20cm]{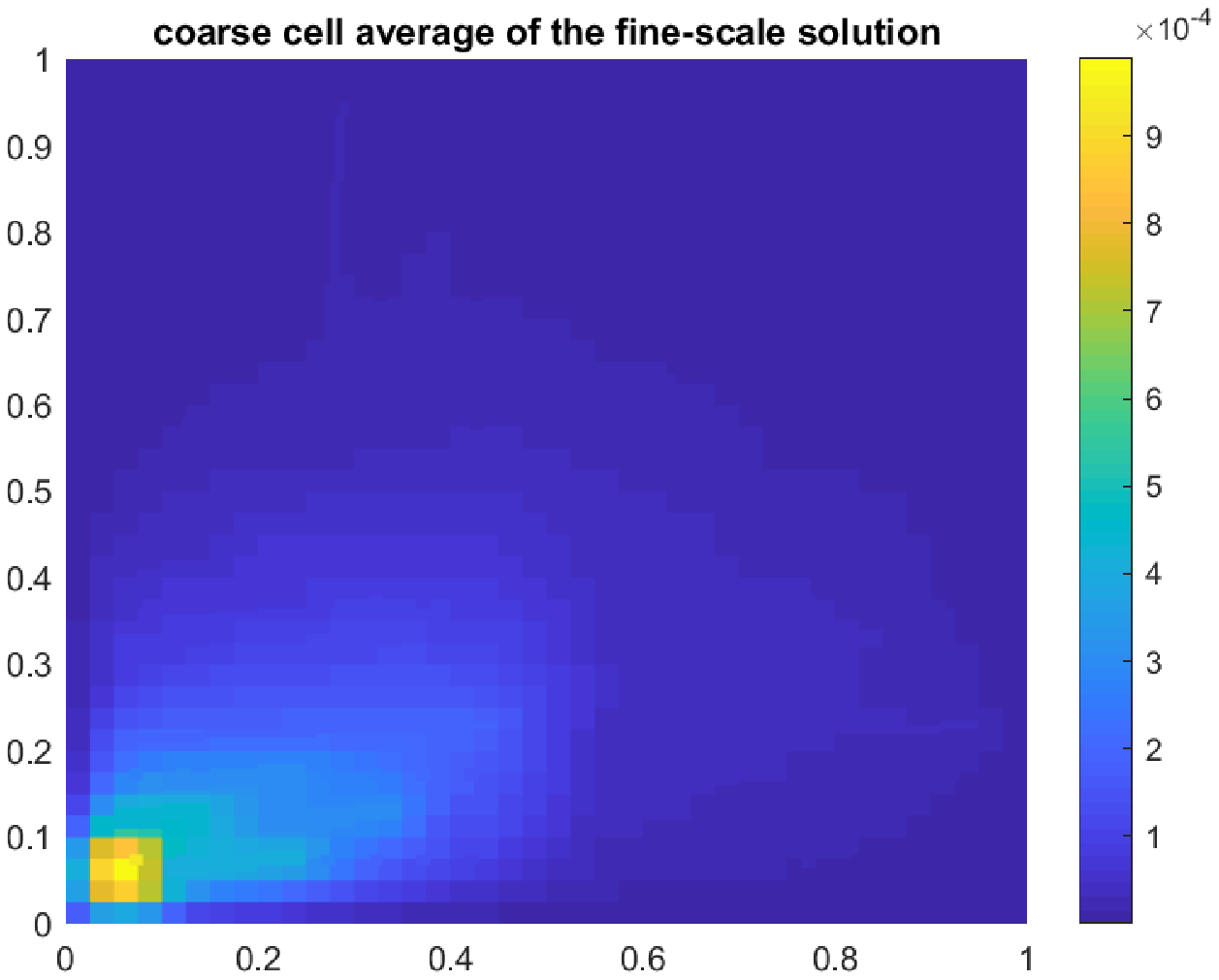}
}
\caption{coarse scale solution and coarse cell average of fine-scale solution.}
\label{ex3-solution-coarse40}
\end{figure}

Then in Table~\ref{table:con3} we display the relative $L^2$ error with respect to different coarse mesh sizes. With proper number of oversampling layers, the error converges as reported in Example~\ref{ex1}. Next, the relative $L^2$ error for coarse grids $20\times 20$ and $40\times 40$ with respect to different number of overampling layers are also reported in Table~\ref{ex3:con}, and this example once again highlights that the error decays as the oversampling layers increase, in addition, more oversampling layers are needed to obtain the desired error as the coarse mesh size decreases.
\begin{table}[H]
\centering
\begin{tabular}{l|llll}
\hline
$H$& oversampling coarse layers & $e_{L^2}$ \\
\hline
$\frac{1}{10}$& 3  & 0.0984\\
$\frac{1}{20}$& 4  & 0.0382\\
$\frac{1}{40}$& 5  & 0.0183\\
\hline
\end{tabular}
\caption{Relative $L^2$ error for Example~\ref{ex3} with varying coarse grid size.}
\label{table:con3}
\end{table}

\begin{table}[H]
\centering
\begin{tabular}{l|ll}
\hline
Layer& coarse mesh \;20$\times$ 20 &coarse mesh \;40$\times$40\\
\hline
1 & 0.8246     & 0.8429  \\
3 & 0.3070     & 0.7229  \\
4 & 0.0382     & 0.2408  \\
5 & 0.0025     & 0.0183   \\
6 & 1.2742e-4  & 5.337e-4 \\
\hline
\end{tabular}
\caption{Relative $L^2$ error with respect to different number of oversampling layers
for Example~\ref{ex3}.}
\label{ex3:con}
\end{table}

\section{Conclusion}\label{sec:conclusion}

In this paper we have developed a simple constraint energy minimization on the oversampling domain to generate the multiscale basis functions, where the construction of the multiscale basis functions relies on the scale separation. In addition, our theory illustrates that the number of oversampling layers required for the convergence is related to the local contrast ratio and the coarse mesh size $H$. Small contrast ratio in each region guarantees the convergence, thus, one should define proper regions in the numerical experiments in order to achieve the desired convergence. Two numerical examples are carried out to test the performances of the proposed method. The numerical results indicate that the relative error decays as the number of oversampling layers increases for a fixed coarse mesh size, furthermore, for a fixed oversampling size, the performance of the scheme will deteriorate as the medium contrast increases.

\section*{Acknowledgements}

The research of Eric Chung is partially supported by the Hong Kong RGC General Research Fund (Project numbers 14304217 and 14302018) and CUHK Faculty of Science Direct Grant 2017-18.


\end{document}